\newtheorem{theorem}{Theorem}[section]
\newtheorem{lemma}[theorem]{Lemma}
\theoremstyle{definition}
\numberwithin{equation}{section}
\numberwithin{table}{section}
\theoremstyle{definition}
\newcommand{\G}{\mathcal{G}}
\newcommand{\baseRing}[1]{\ensuremath{\mathbb{#1}}}
\newcommand{\C}{\baseRing{C}}
\newcommand{\Z}{\baseRing{Z}}
\newcommand{\F}{\baseRing{F}}
\renewcommand{\phi}{\varphi}
\newcommand{\diag}{\operatorname{diag}}
\newcommand{\tr}{\operatorname{tr}}
\newcommand{\norm}[1]{\|#1\|}
\newcommand{\minimatrix}[4]{\begin{pmatrix}#1&#2\\#3&#4\end{pmatrix}}
\newcommand{\megamatrix}[8]{\left(\begin{array}{cc|cc}#1&#2&&\\#3&#4&&\\ \hline &&#5&#6\\&&#7&#8\end{array} \right)}
\newcommand{\CC}{\mathcal{C}}
\begin{document}
\title[Classical Kloosterman sums]{Classical Kloosterman sums:  representation theory, magic squares,
and Ramanujan multigraphs}

\author[P.S. Fleming]{Patrick S. Fleming}	
\address{Mathematics and Computer Science Department\\
South Dakota School of Mines and Technology\\
501 East Saint Joseph Street \\ Rapid City, South Dakota 57701}
\email{Patrick.Fleming@sdsmt.edu}

\author[S.R. Garcia]{Stephan Ramon Garcia}
    \address{   Department of Mathematics\\
            Pomona College\\
            610 N.~College Ave\\
            Claremont, California\\
            91711}
    \email{Stephan.Garcia@pomona.edu}
    \urladdr{http://pages.pomona.edu/\textasciitilde sg064747}

\author[G. Karaali]{Gizem Karaali}
    \email{Gizem.Karaali@pomona.edu}
    \urladdr{http://pages.pomona.edu/\textasciitilde gk014747}

\thanks{This work partially funded by NSF grant DMS-0901523 (\emph{Research Experiences for Undergraduate Faculty}).
S.R.~Garcia partially funded by NSF grants DMS-0638789 and DMS-1001614. }

\begin{abstract}
	We consider a certain finite group for which Kloosterman sums appear as character values.
	This leads us to consider a concrete family of commuting hermitian matrices which have Kloosterman
	sums as eigenvalues.  These matrices satisfy a number of ``magical'' combinatorial properties
	and they encode various arithmetic properties of Kloosterman sums.
	These matrices can also be regarded as adjacency matrices for multigraphs which display
	Ramanujan-like behavior.
\end{abstract}

\maketitle

\bibliographystyle{plain}

\section{Introduction}
	For a fixed odd prime $p$, let $\zeta = \exp(2\pi i /p)$ and define
	the classical \emph{Kloosterman sum}
	$K(a,b) := K(a,b,p)$ by setting
	\begin{equation}\label{eq-KloostermanDefinition}
		K(a,b) = \sum_{n =1}^{p-1} \zeta^{an+b\overline{n} }
	\end{equation}
	where $\overline{n}$ denotes the inverse of $n$ modulo $p$.  
	From \eqref{eq-KloostermanDefinition}, it follows that 
	$K(a,b)$ is real and that its value depends only upon the residue classes of $a$ and $b$ modulo $p$.
	In light of the fact that $K(a,b) = K(1,ab)$ whenever $p \nmid a$, we focus our
	attention mostly on Kloosterman sums of the form $K(1,u)$. 
	Moreover, we adopt the shorthand $K(u) := K(1,u)$ or even $K_u := K(1,u)$ when space is at a premium.

	In the years since they appeared in Kloosterman's paper on quadratic forms \cite{Kloosterman}, these exponential sums and
	their generalizations have found many diverse applications.  We do not attempt to give a historical
	account of the subject and instead direct the reader to \cite{DRHB,Hurt,Iwaniec, KatzBook}.
		
	In this note we construct a certain finite group for which 
	Kloosterman sums appear as character values (Section \ref{SectionG}).
	This eventually leads us to consider a concrete family of commuting hermitian matrices which have Kloosterman
	sums as eigenvalues (Section \ref{SectionConstruction}).  
	These matrices satisfy a number of ``magical'' combinatorial properties (Section \ref{SectionSubmatrices})
	and they encode various arithmetic properties of Kloosterman sums (Section \ref{SectionApplications}).  
	Moreover, these matrices can be regarded as the adjacency matrices for multigraphs which display
	Ramanujan-like behavior (Section \ref{SectionRamanujan}).	

\subsection*{Acknowledgments}  We thank Philip C. Kutzko for suggesting the initial representation
	theory project that spurred this work.  In particular, our basic approach stems from his paper \cite{Ku75}.  We also
	thank the American Institute of Mathematics (AIM) for hosting us for a week as part of the NSF-funded (DMS-0901523)
	\emph{Research Experiences for Undergraduate Faculty} program.

\section{The group ${\bf G}$ and its representation theory}\label{SectionG}
	Let $p>3$ be an odd prime and define the subgroup
	\begin{equation*}
		{\bf G} = \left \{ \left. \minimatrix{x}{y}{0}{1} \oplus \minimatrix{x^{-1} }{z}{0}{1}  \right| 
		x \in (\Z / p \Z)^{\times},\, y,z \in \Z / p \Z \right \}
	\end{equation*}
	of $GL_4(\Z/p\Z)$.  Here we identify direct sums of two $2 \times 2$ matrices with the 
	corresponding $4 \times 4$ matrices.  In the following, we denote matrix groups by bold 
	capital letters (e.g., ${\bf G}$) and their elements by capital letters (e.g., $I$ denotes the 
	$4\times 4$ identity matrix in ${\bf G}$).  Elements of $\Z / p \Z$ are 
	represented by lower-case letters.

	Letting
	\begin{align}
		{\bf N} &= \left\{ \left. \minimatrix{1}{y}{0}{1} \oplus \minimatrix{1}{z}{0}{1} \right| y,z \in \Z / p \Z \right\}, \\[5pt]
		{\bf T} &= \left\{ \left. \minimatrix{x}{0}{0}{1} \oplus \minimatrix{x^{-1}}{0}{0}{1} \right| x \in (\Z / p \Z)^{\times} \right\}, 
	\end{align}
	we find that ${\bf G} = {\bf N} {\bf T}$ and ${\bf N} \cap {\bf T} = \{ I \}$.  
	Since $|{\bf T} | = p-1$ and $| {\bf N} | =  p^2$, we have
	\begin{equation*}
		| {\bf G} | = (p-1)p^2.
	\end{equation*}
The conjugacy classes of ${\bf G}$ are easily computable and are given in Table \ref{TableConjugacy}.

\begin{table}
	\begin{equation*}\footnotesize
		\boxed{
		\begin{array}{crclc} 
		\underset{\text{$p-1$ classes}}{\textsc{Type 1:}}\quad
		&\CC_1 &=& \left\{  \minimatrix{1}{y}{0}{1} \oplus \minimatrix{1}{y^{-1}}{0}{1} : y \in (\Z/p\Z)^{\times} \right\} 
			& \text{($p-1$ elements)} \\ \\
		&\CC_2 &=& \left\{  \minimatrix{1}{2y}{0}{1} \oplus \minimatrix{1}{y^{-1}}{0}{1} : y \in (\Z/p\Z)^{\times} \right\} 
			& \text{($p-1$ elements)} \\ \\
		& \vdots &&\qquad\qquad\vdots&\vdots \\
		&\CC_{p-1} &=& \left\{  \minimatrix{1}{(p-1)y}{0}{1} \oplus \minimatrix{1}{y^{-1}}{0}{1} : y \in (\Z/p\Z)^{\times} \right\} 
			& \text{($p-1$ elements)} \\ \\
		\underset{\text{$2$ classes}}{\textsc{Type 2:}}\quad
		&\CC_{p} &=& \left\{  \minimatrix{1}{y}{0}{1} \oplus \minimatrix{1}{0}{0}{1} : y \in (\Z/p\Z)^{\times} \right\} 
			& \text{($p-1$ elements)} \\ \\
		&\CC_{p+1} &=& \left\{  \minimatrix{1}{0}{0}{1} \oplus \minimatrix{1}{y}{0}{1} : y \in (\Z/p\Z)^{\times} \right\} 
			& \text{($p-1$ elements)} \\ \\
		\underset{\text{$1$ class}}{\textsc{Type 3:}}\quad
		&\CC_{p+2} &=& \left \{  \minimatrix{1}{0}{0}{1} \oplus \minimatrix{1}{0}{0}{1} \right\} 
			&\text{($1$ element)}\\ \\
		\underset{\text{$p-2$ classes}}{\textsc{Type 4:}}\quad
		&\CC_{p+3} &=& \left\{  \minimatrix{g}{y}{0}{1} \oplus \minimatrix{g^{-1}}{z}{0}{1} : y,z \in \Z/p\Z \right\} 
			& \text{($p^2$ elements)} \\ \\
		&\CC_{p+4} &=& \left\{  \minimatrix{g^2}{y}{0}{1} \oplus \minimatrix{g^{-2}}{z}{0}{1} : y,z \in \Z/p\Z \right\} 
			& \text{($p^2$ elements)} \\ \\
		& \vdots &&\qquad\qquad\vdots&\vdots \\
		&\CC_{2p} &=& \left\{  \minimatrix{g^{p-2}}{y}{0}{1} \oplus \minimatrix{g^{-(p-2)}}{z}{0}{1} : y,z \in \Z/p\Z \right\} 
			& \text{($p^2$ elements)} \\ \\
		\end{array}
		}
	\end{equation*}
	\caption{\footnotesize The conjugacy classes of ${\bf G}$ (here $g$ denotes a primitive root modulo $p$).
	In particular, ${\bf G}$ has a total of $2p$ conjugacy classes whence there exist precisely $2p$ distinct irreducible
	representations of ${\bf G}$ \cite[Theorem 27.22]{CR62}.}
	\label{TableConjugacy}
\end{table}

	Since the commutator subgroup $[{ \bf G}, {\bf G}] = {\bf N}$ of ${\bf G}$ must belong to the kernel
	of any one-dimensional representation $\pi:{\bf G} \to \C$, it follows that $\pi(NT) = \pi(N)\pi(T) = \pi(T)$ 
	for all $N \in {\bf N}$ and $T \in {\bf T}$.  Thus the one-dimensional representations 
	of ${\bf G}$ correspond to one-dimensional representations of ${\bf T} \cong ( \Z / p \Z)^{\times}$.
	Fix a primitive $(p-1)$st root of unity $\xi$.  If $T$ denotes a generator of ${\bf T}$, then 
	for $n=1,2,\ldots,p-1$ the formula
	\begin{equation*}
		\pi(T) = \xi^n, \qquad \pi(N) = 1,\quad N \in {\bf N}
	\end{equation*}
	yields $p-1$ distinct irreducible representation of ${\bf G}$.
	
	Let us now identify the remaining irreducible representations.
	As before, we let $\zeta = \exp(2\pi i /p)$.  Fixing $a,b \in \Z / p \Z$, at least one of which is nonzero,
	we claim that the map $\pi : {\bf G} \to \operatorname{End}( \C[ (\Z / p \Z)^{\times}] )$ defined by
	\begin{equation}\label{eq-BigPi}
		\pi\left( \minimatrix{x}{y}{0}{1} \oplus \minimatrix{x^{-1} }{z}{0}{1} \right) \delta_h
		=  \zeta^{ a z (xh) + b y(xh)^{-1} } \delta_{xh}
	\end{equation}
	for $h \in (\Z / p \Z)^{\times}$ is an irreducible representation of ${\bf G}$.
	Verifying that $\pi$ is a homomorphism is straightforward, so we only prove irreducibility.

	Suppose that $a \neq 0$ and note that setting
	$x = 1$, $y = 0$, and $z = 1$ in \eqref{eq-BigPi} yields
	\begin{equation*}
		\pi \left( \minimatrix{1}{0}{0}{1} \oplus \minimatrix{1}{1}{0}{1} \right) \delta_h = \zeta^{ah} \delta_h
	\end{equation*}
	for $h \in (\Z / p \Z)^{\times}$.  The preceding is just another way of saying that 
	\begin{equation}\label{eq-DiagonalDistinct}
		\pi \left( \minimatrix{1}{0}{0}{1} \oplus \minimatrix{1}{1}{0}{1} \right) =
			\diag((\zeta^a)^1,(\zeta^a)^2,\ldots,(\zeta^a)^{p-1})
	\end{equation}
	with respect to the standard basis $\{\delta_1, \delta_2, \ldots, \delta_{p-1}\}$ of $\C[(\Z / p \Z)^{\times}]$.  
	Since $a \neq 0$ it follows that $\zeta^a$ is a primitive $p$th root of unity and hence the diagonal entries of 
	\eqref{eq-DiagonalDistinct} are distinct.  Thus the subspaces of $\C[( \Z / p \Z)^{\times}]$ which 
	are invariant under the matrix \eqref{eq-DiagonalDistinct} are precisely those of the form 
	$\operatorname{span}(K)$ for some $K \subseteq (\Z / p \Z)^{\times}$.  
	Suppose that $K \neq \varnothing$, $K \neq (\Z / p \Z)^{\times}$, and $x \notin K$.  For each $k \in K$, 
	\eqref{eq-BigPi} implies that
	\begin{equation*}
		\pi\left( \minimatrix{xk^{-1}}{0}{0}{1} \oplus \minimatrix{x^{-1}k}{0}{0}{1} \right) \delta_k 
		= \delta_{ (xk^{-1})k } = \delta_x \notin \operatorname{span}(K).
	\end{equation*}
	Thus $\operatorname{span}(K)$ is not invariant under $\pi$ whence $\pi$ is irreducible, as claimed.
	The proof in the case $b \neq 0$ is similar.

	The choices $a = 1$, $b = 0$ and $a = 0$, $b = 1$ lead us to two special characters,
	whose values on the various conjugacy classes
	can be found via a geometric series argument.
	We are interested primarily in $\pi$ arising when $a \neq 0$ and $b \neq 0$.
	Let $\chi_j$ denote the trace of $\pi$ corresponding to $j = a^{-1}b$. Using the 
	transformation rules for Kloosterman sums we find that
	$\chi_j( \CC_k) = K(a,bk) = K(1,jk) = K_{jk}$ for $1 \leq j,k \leq p- 1$.
	Since the Kloosterman sums 
	$K(1), K(2),\ldots, K(p-1)$ are distinct \cite[Prop.~1.3]{Fisher},
	it follows that the characters $\chi_1,\chi_2,\ldots, \chi_{p-1}$ are distinct.
	Now we can complete the character table for ${\bf G}$ (Table \ref{TableCharacter}).

\begin{landscape}
\small
\begin{table}
\begin{equation*}
\begin{array}{|c||c|c|c|c||c|c||c||c|c|c|c|}
\hline
{\bf G}&  \CC_1  & \CC_2  & \cdots  & \CC_{p-1}  &  \CC_{p} & \CC_{p+1} & \CC_{p+2} & \CC_{p+3} & \CC_{p+4} & \cdots & \CC_{2p} \\[2pt]
\hline
& \minimatrix{1}{1}{0}{1} & \minimatrix{1}{2}{0}{1} & \cdots & \minimatrix{ 1 }{f}{0}{1} & \minimatrix{1}{1}{0}{1} & \minimatrix{1}{0}{0}{1} & \minimatrix{1}{0}{0}{1} & \minimatrix{ g}{0}{0}{1}& \minimatrix{g^2}{0}{0}{1}& \cdots &\minimatrix{g^{p-2}}{0}{0}{1} \\[2pt]
& \oplus &  \oplus &  \cdots &  \oplus &  \oplus &  \oplus &  \oplus &  \oplus &  \oplus &  \cdots &  \oplus \\
& \minimatrix{1}{1}{0}{1} &  \minimatrix{1}{1}{0}{1} &  \cdots &  \minimatrix{1}{1}{0}{1} &  \minimatrix{1}{0}{0}{1} &  \minimatrix{1 }{1}{0}{1} &  \minimatrix{1}{0}{0}{1} &  \minimatrix{g^{-1}}{0}{0}{1} &  \minimatrix{ g^{-2}}{0}{0}{1} &  \cdots &  \minimatrix{g^{-(p-2)}}{0}{0}{1} \\[2pt]
\hline
|\CC_i|& f & f & \cdots & f &f&  f&  1 & p^2 & p^2 & \cdots & p^2 \\[2pt]
\hline\hline
\chi_1 &K_1 &K_2 &\cdots&K_f &-1&-1&f& 0&0&\cdots&0\\[2pt]
\hline
\chi_2 &K_2 &K_4 &\cdots &K_{2f} &-1&-1&f&0 &0&\cdots&0\\[2pt]
\hline
\vdots & \vdots& \vdots&\ddots& \vdots& \vdots& \vdots& \vdots&\vdots& \vdots& \ddots&\vdots\\[2pt]
\hline
\chi_{p-1} &K_f &K_{2f} &\cdots&K_{f^2} &-1&-1&f&0 &0&\cdots&0\\[2pt]
\hline\hline
\chi_p & -1& -1& \cdots& -1&f&-1&f&0&0& \cdots&0\\[2pt]
\hline
\chi_{p+1} &-1&-1&\cdots&-1&-1&f&f&0&0& \cdots&0\\[2pt]
\hline\hline
\chi_{p+2} &1&1&\cdots&1&1&1&1&1&1& \cdots&1\\[2pt]
\hline\hline
\chi_{p+3} &1&1&\cdots&1&1&1&1&\xi&\xi^2& \cdots&\xi^{p-2}\\[2pt]
\hline
\chi_{p+4} &1&1&\cdots&1&1&1&1&\xi^2&\xi^4& \cdots&\xi^{2(p-2)}\\[2pt]
\hline
\vdots & \vdots& \vdots&\ddots& \vdots& \vdots& \vdots& \vdots&\vdots& \vdots& \ddots&\vdots\\[2pt]
\hline
\chi_{2p} &1&1&\cdots&1&1&1&1&\xi^{(p-2)}&\xi^{2(p-2)}& \cdots&\xi^{(p-2)^2}\\[2pt]
\hline
\end{array}
\end{equation*}

\caption{\footnotesize The character table of ${\bf G}$.  Here $\xi$ is a fixed primitive $(p-1)$st root of unity and
$f=p-1$.  Since $K_0 = K_p =  -1$, we may regard the initial string of 
$-1$'s in the row corresponding to $\chi_p$ as being $K_0$'s.  This convention will simplify
several formulas later on. }
\label{TableCharacter}
\end{table}
\end{landscape}

\section{The main construction}\label{SectionConstruction}

\subsection{The crucial lemma}\label{SubsectionLemma}

	From the representation-theoretic information computed in Section \ref{SectionG}, we will
	construct a family of commuting hermitian matrices which encode many 
	fundamental properties of classical Kloosterman sums.  Our primary tool is the following
	lemma, which is a modification of \cite[Lem.~4]{Ku75} (although there 
	the reader is simply referred to \cite[Section 33]{CR62} to compose a proof of 
	this lemma on their own).  We provide a detailed proof for the sake of completeness.

\begin{lemma}\label{LemmaPowerful}
	Let ${\bf G}$ be a finite group having conjugacy classes $\CC_1, \CC_2, \ldots, \CC_s$ 
	and irreducible representations $\pi_1, \pi_2, \ldots, \pi_s$ with corresponding characters 
	$\chi_1, \chi_2, \ldots, \chi_s$.  For $1 \leq k \leq s$, fix $z=z(k) \in \CC_k$ and let 
	$c_{i,j,k}$ denote the number of solutions $(x_i,y_j) \in \CC_i \times \CC_j$ of $xy = z$ and then let   
	$M_i = (c_{i,j,k})_{j,k=1}^s$ for $1 \leq i \leq s$.

	If $W = (w_{j,k})_{j,k=1}^s$ denotes the $s \times s$ matrix with entries
	\begin{equation}\label{eq-OmegaDefinition}
		w_{j,k} = \frac{|\CC_j|\chi_k(\CC_j)}{\dim \pi_k}, 
	\end{equation}
	and $D_i = \operatorname{diag}(w_{i,1}, w_{i,2}, \ldots, w_{i,s})$, then $W$ is invertible and
	\begin{equation}\label{eq-Similarity}
		M_i W = W D_i  
	\end{equation}
	for $i=1,2,\ldots,s$.  Moreover, if we let
	$Q = \operatorname{diag}( \sqrt{ | \CC_1| }, \sqrt{ |\CC_2| }, \ldots, \sqrt{ |\CC_s| } )$, then
	the matrices $T_i = Q^{-1}M_i Q$ are simultaneously unitarily diagonalizable.  
	To be more specific, we have $T_i U = UD_i$ for $i=1,2,\ldots,s$ where
	\begin{equation}\label{eq-UDefinition}
		U = \frac{1}{ \sqrt{ | {\bf G} |} }\left(  \textstyle\sqrt{ |\CC_j| } \chi_k(\CC_j) \right)_{j,k=1}^s
	\end{equation}
	is a unitary matrix.
\end{lemma}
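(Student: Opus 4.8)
The plan is to recognize the $M_i$ as encoding multiplication in the center $Z(\C[{\bf G}])$ of the group algebra. For each class $\CC_i$ form the class sum $\widehat{\CC}_i = \sum_{g \in \CC_i} g \in \C[{\bf G}]$; the $s$ elements $\widehat{\CC}_1,\ldots,\widehat{\CC}_s$ are a basis of $Z(\C[{\bf G}])$. First I would check, via the conjugation bijection $(x,y)\mapsto(gxg^{-1},gyg^{-1})$ on solution sets, that $c_{i,j,k}$ does not depend on the chosen representative $z\in\CC_k$, and that consequently $\widehat{\CC}_i\,\widehat{\CC}_j = \sum_{k=1}^{s} c_{i,j,k}\,\widehat{\CC}_k$. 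Thus $M_i$ is the transpose of the matrix of the operator ``multiplication by $\widehat{\CC}_i$'' on $Z(\C[{\bf G}])$ expressed in the class-sum basis.

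Next I would bring in the central characters. Since $\widehat{\CC}_i$ is central, Schur's lemma gives $\pi_k(\widehat{\CC}_i) = \omega_k(\widehat{\CC}_i)\,I$ for a scalar $\omega_k(\widehat{\CC}_i)$, and taking traces yields $\omega_k(\widehat{\CC}_i) = |\CC_i|\chi_k(\CC_i)/\dim\pi_k = w_{i,k}$. Moreover $\omega_k\colon Z(\C[{\bf G}])\to\C$ is an algebra homomorphism, so applying $\omega_k$ to $\widehat{\CC}_i\,\widehat{\CC}_j = \sum_l c_{i,j,l}\,\widehat{\CC}_l$ gives $\sum_l c_{i,j,l}\,w_{l,k} = w_{i,k}w_{j,k}$ for all $i,j,k$; reading this off as a matrix identity is exactly $M_i W = W D_i$. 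For invertibility of $W$, write $W = \operatorname{diag}(|\CC_1|,\ldots,|\CC_s|)\,X\,\operatorname{diag}(1/\dim\pi_1,\ldots,1/\dim\pi_s)$ with $X = (\chi_k(\CC_j))_{j,k=1}^s$; the two diagonal factors are invertible since $|\CC_j|\neq 0$ and $\dim\pi_k\neq 0$, and $X$ is the transpose of the character table, which is invertible by the linear independence of the irreducible characters (equivalently, by the orthogonality relations). Since all $M_i$ are conjugated to diagonal matrices by the single matrix $W$, they commute.

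For the unitary normalization, I would observe that a direct computation gives $QU = W\,\operatorname{diag}\!\big(\dim\pi_1/\sqrt{|{\bf G}|},\ldots,\dim\pi_s/\sqrt{|{\bf G}|}\big)$, i.e.\ $QU$ is $W$ followed by an invertible diagonal matrix; since $M_i W = W D_i$ with $D_i$ diagonal, the same relation persists under any such right rescaling (diagonal matrices commute), so $M_i(QU) = (QU)D_i$, that is, $T_i U = U D_i$ with $T_i = Q^{-1}M_i Q$. That $U$ is unitary is the first orthogonality relation: $(U^{*}U)_{k,l} = \frac{1}{|{\bf G}|}\sum_{j}|\CC_j|\,\overline{\chi_k(\CC_j)}\,\chi_l(\CC_j) = \frac{1}{|{\bf G}|}\sum_{g\in{\bf G}}\overline{\chi_k(g)}\chi_l(g) = \delta_{k,l}$. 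Hence $T_i = U D_i U^{*}$ for every $i$, which is the asserted simultaneous unitary diagonalization. The conceptual input here is entirely standard (Schur's lemma and the two orthogonality relations, cf.\ \cite{CR62}); the only real work is bookkeeping — keeping the transpose, the $|\CC_j|$ weights, and the $\dim\pi_k$ normalizations consistent — and I expect matching $QU$ against $W$ to be the one spot where a careless sign or factor could creep in.
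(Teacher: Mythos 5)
Your proof is correct and follows essentially the same path as the paper's: class sums $\mathsf{C}_i$ spanning $Z(\C[{\bf G}])$, the class algebra relation $\mathsf{C}_i\mathsf{C}_j=\sum_k c_{i,j,k}\mathsf{C}_k$, Schur's lemma to get the central character values $w_{j,k}$, and the resulting eigenvalue equation $M_iW=WD_i$, followed by the observation that $QU$ is $W$ times an invertible diagonal matrix and the first orthogonality relation to get unitarity of $U$. The only cosmetic differences are that you phrase the eigenvalue step via the algebra homomorphisms $\omega_k$ rather than applying $\pi_r$ directly, and you establish invertibility of $W$ by factoring through the character table rather than deferring to the unitarity of $U$ at the end; neither changes the substance.
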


\begin{proof}
	For $j=1,2,\ldots,s$ define 
	\begin{equation*}
		\mathsf{C}_j = \sum_{x \in\, \CC_j} x
	\end{equation*}
	and observe that 
	\begin{equation}\label{eq-cijk}
		\mathsf{C}_i \mathsf{C}_j = \sum_{k=1}^s c_{i,j,k} \mathsf{C}_k
	\end{equation}
	holds for $1 \leq i,j,k \leq s$.  Upon applying $\chi_k$ to $\mathsf{C}_j$ we also note that
	\begin{equation}\label{Eq:FirstTraceFormula}
		\chi_k(\mathsf{C}_j)  = |\CC_j| \chi_k(\CC_j)
	\end{equation}
	since the class function $\chi_k$ assumes the constant value $\chi_k(\CC_j)$ on $\CC_j$.

	Since each $\mathsf{C}_j$ belongs to the center $Z(\C[{\bf G}])$ of $\C[{\bf G}]$ 
	(in fact $\{\mathsf{C}_1,\mathsf{C}_2,\ldots,\mathsf{C}_s\}$ is a basis for $Z(\C[{\bf G}])$ 
	by \cite[Thm.~27.24]{CR62} or \cite[Thm.~2.4]{Isaacs}) 
	and each $\pi_k$ is irreducible, it follows that $\pi_k(\mathsf{C}_j)$ is scalar for $1 \leq j,k \leq s$ 
	(this follows from a standard version of Schur's Lemma \cite[Thm.~29.13]{CR62}). 
	Thus there exist constants $w_{jk}$ such that
	\begin{equation}\label{eq-PiIdentityMatrix}
		\pi_k(\mathsf{C}_j) = w_{j,k} I_{d_k}
	\end{equation}
	for $1\leq j,k\leq s$ where $d_k = \dim \pi_k$ and $I_{d_k}$ denotes the $d_k \times d_k$ identity matrix.  
	Taking the trace of the preceding yields 
	\begin{equation}
		\label{Eq:SecondTraceFormula} 
		\chi_k(\mathsf{C}_j) = d_k  w_{j,k}. 
	\end{equation}
	Comparing \eqref{Eq:FirstTraceFormula} and \eqref{Eq:SecondTraceFormula} we find that
	\begin{equation*}
		 |\CC_j|  \chi_k(\CC_j) =  d_k  w_{j,k},
	\end{equation*}
	which gives us the formula \eqref{eq-OmegaDefinition}.
	Applying $\pi_r$ to \eqref{eq-cijk} and using \eqref{eq-PiIdentityMatrix} we obtain
	\begin{equation*}
		w_{i,r}  I_{d_r} w_{j,r} I_{d_r} =\sum_{k=1}^s c_{i,j,k} w_{kr}I_{d_r},
	\end{equation*}
	which clearly implies that
	\begin{equation*}
		w_{i,r} w_{j,r} =\sum_{k=1}^s c_{i,j,k} w_{k,r}.
	\end{equation*}
	Now simply observe that the preceding is the $(j,r)$th entry of the matrix equation \eqref{eq-Similarity}.
	Next we note that $W = \sqrt{|{\bf G}|} QUR$ where 
	$R = \operatorname{diag}( d_1^{-1}, d_2^{-1}, \ldots, d_s^{-1} )$.  In particular, it follows that
	\begin{align*}
		QUD_iR
		&= QURD_i &&\text{($R,D_i$ are diagonal)} \\ 
		&= M_i QUR &&\text{(by \eqref{eq-Similarity})}
	\end{align*}
	whence $QUD_i = M_i QU$ since $R$ is invertible.  Since $Q$ is invertible this yields
	$T_i U = UD_i$ where $T_i = Q^{-1} M_i Q$.  The fact that $|{\bf G}|^{-1/2} U$ is unitary
	(whence $W$ is invertible) follows from the orthogonality
	of the irreducible characters $\chi_1,\chi_2,\ldots,\chi_s$.
\end{proof}

\subsection{Main construction}\label{SubsectionMain}

	We now apply Lemma \ref{LemmaPowerful} to the group ${\bf G}$ constructed
	in Section \ref{SectionG}.
	As we shall see in Section \ref{SectionApplications}, the matrices produced encode many of 
	the basic properties of Kloosterman sums.

	Recall that the $(j,k)$ entry $(M_i)_{j,k}$ of $M_i$
	is defined to be the integer $c_{i,j,k}$ described in Lemma \ref{LemmaPowerful}.
	Since ${\bf G}$ has four distinct types of conjugacy classes (see Table \ref{TableConjugacy}),
	we partition each $M_i$ into 16 submatrices.  As in Table \ref{TableCharacter} we adopt the
	convention that $f = p-1$.
	It turns out that for $1 \leq i \leq f$ each of the $M_i$ has basically
	the same structure as $M_1$, so we only display $M_1$ explicitly:
	\begin{equation}\label{eq-M1}
		M_1 = \small
		\left(
		\begin{array}{cccc|cc|c|cccc}
			c_{1,1,1}&c_{1,1,2}&\cdots&c_{1,1,f}&0&0&f&0&0&\cdots&0\\
			c_{1,2,1}&c_{1,2,2}&\cdots&c_{1,2,f}&1&1&0&0&0&\cdots&0\\[-3pt]
			\vdots&\vdots&\ddots&\vdots&\vdots&\vdots&\vdots&\vdots&\vdots&\ddots&\vdots\\
			c_{1,f,1}&c_{1,f,2}&\cdots&c_{1,f,f}&1&1&0&0&0&\cdots&0\\
			\hline
			0&1&\cdots&1&0&1&0&0&0&\cdots &0\\
			0&1&\cdots&1&1&0&0&0&0&\cdots &0\\
			\hline
			1&0&\cdots&0&0&0&0&0&0&\cdots&0\\
			\hline
			0&0&\cdots&0&0&0&0&f&0&\cdots&0\\
			0&0&\cdots&0&0&0&0&0&f&\cdots&0\\[-3pt]
			\vdots&\vdots&\ddots&\vdots&\vdots&\vdots&\vdots&\vdots&\vdots&\ddots&\vdots\\
			0&0&\cdots&0&0&0&0&0&0&\cdots&f\\
		\end{array}
		\right).
	\end{equation}
	We are interested primarily in studying the entries $c_{i,j,k}$ for $1 \leq i,j,k\leq f$ and we
	discuss them at length in Section \ref{SectionSubmatrices}.  

	In general, $M_i$ for $2 \leq i \leq f$ differs from $M_1$ only in the upper-left $3 \times 3$ blocks.
	For instance the upper-left corner of $M_2$ looks like
	\begin{equation}\label{eq-Others}
		\left(\small
		\begin{array}{ccccc|cc|c}
			c_{2,1,1}&c_{2,1,2}&c_{2,1,3}&\cdots&c_{2,1,f}&1&1&0\\
			c_{2,2,1}&c_{2,2,2}&c_{2,2,3}&\cdots&c_{2,2,f}&0&0&f\\
			c_{2,3,1}&c_{2,3,2}&c_{2,3,3}&\cdots&c_{2,3,f}&1&1&0\\
			\vdots&\vdots&\vdots&\ddots&\vdots&\vdots&\vdots&\vdots\\
			c_{2,f,1}&c_{2,f,2}&c_{2,f,3}&\cdots&c_{2,f,f}&1&1&0\\
			\hline
			1&0&1&\cdots&1&0&1&0\\
			1&0&1&\cdots&1&1&0&0\\
			\hline
			0&1&0&\cdots&0&0&0&0
		\end{array}
		\right).
	\end{equation}
	We therefore restrict our attention mostly to the case $i = 1$ since the computations for $i = 2,3,\ldots,f$
	are almost identical.
	
	Examining the character table (Table \ref{TableCharacter})
	of ${\bf G}$ tells us that
	\begin{align}
		D_i &= \diag( K_{1i},K_{2i},\ldots,K_{(p-1)i},-1,-1,\underbrace{f,f,\ldots,f}_{\text{$p-1$ times}}) , \label{eq-BigD}\\
		Q &= \diag( \underbrace{ \sqrt{f}, \sqrt{f}, \ldots, \sqrt{f} }_{\text{$p+1$ times}},1,\underbrace{p,p,\ldots,p}_{\text{$p-2$ times}}),
		\nonumber
	\end{align}
	and 
	\begin{equation*}
		W = \tiny
		\left(
		\begin{array}{cccc|cc|c|cccc}
			K_1&K_2&\cdots&K_f&-1&-1&f&f&f&\cdots&f\\
			K_2&K_4&\cdots&K_{2f}&-1&-1&f&f&f&\cdots&f\\
			\vdots&\vdots&\ddots&\vdots&\vdots&\vdots&\vdots&\vdots&\vdots&\ddots&\vdots\\
			K_f&K_{2f}&\cdots&K_{f^2}&-1&-1&f&f&f&\cdots&f\\
			\hline
			-1&-1&\cdots&-1&f&-1&f&f&f&\cdots&f\\
			-1&-1&\cdots&-1&-1&f&f&f&f&\cdots&f\\
			\hline
			1&1&\cdots &1&1&1&1&1 &1&\cdots&1\\
			\hline
			0&0&\cdots&0&0&0&p^2&p^2\xi&p^2\xi^2&\cdots&p^2\xi^{p-2}\\
			0&0&\cdots&0&0&0&p^2&p^2\xi^2&p^2\xi^4&\cdots&p^2\xi^{2(p-2)}\\
			\vdots&\vdots&\ddots&\vdots&\vdots&\vdots&\vdots&\vdots&\vdots&\ddots&\vdots\\
			0&0&\cdots&0&0&0&p^2&p^2\xi^{p-2}&p^2\xi^{2(p-2)}&\cdots&p^2\xi^{(p-2)^2}\\
		\end{array}
		\right).
	\end{equation*}
	Since Lemma \ref{LemmaPowerful} guarantees that the matrices $T_i := Q^{-1}M_iQ$ are simultaneously
	unitarily similar to the corresponding $D_i$'s, each of which has only real entries, it follows from
	the Spectral Theorem that each $T_i$ is hermitian.  For instance,
	\begin{equation*}\footnotesize
		T_1 =  
		\left(
		\begin{array}{cccc|cc|c|cccc}
			c_{1,1,1}&c_{1,1,2}&\cdots&c_{1,1,f}&0&0&\sqrt{f}&0&0&\cdots&0\\
			c_{1,2,1}&c_{1,2,2}&\cdots&c_{1,2,f}&1&1&0&0&0&\cdots&0\\[-3pt]
			\vdots&\vdots&\ddots&\vdots&\vdots&\vdots&\vdots&\vdots&\vdots&\ddots&\vdots\\
			c_{1,f,1}&c_{1,f,2}&\cdots&c_{1,f,f}&1&1&0&0&0&\cdots&0\\
			\hline
			0&1&\cdots&1&0&1&0&0&0&\cdots &0\\
			0&1&\cdots&1&1&0&0&0&0&\cdots &0\\
			\hline
			\sqrt{f}&0&\cdots&0&0&0&0&0&0&\cdots&0\\
			\hline
			0&0&\cdots&0&0&0&0&f&0&\cdots&0\\
			0&0&\cdots&0&0&0&0&0&f&\cdots&0\\[-3pt]
			\vdots&\vdots&\ddots&\vdots&\vdots&\vdots&\vdots&\vdots&\vdots&\ddots&\vdots\\
			0&0&\cdots&0&0&0&0&0&0&\cdots&f\\
		\end{array}
		\right).
	\end{equation*}
	Moreover, the unitary matrix $U$ of Lemma \ref{LemmaPowerful} is given by
	\begin{equation*}\tiny
		U = \frac{1}{p}
		\left(
		\begin{array}{cccc|cc|c|cccc}
			K_1&K_2&\cdots&K_f&-1&-1&1&1&1&\cdots&1\\
			K_2&K_4&\cdots& K_{2f}&-1 &-1 &1 &1&1&\cdots&1\\
			\vdots&\vdots&\ddots&\vdots&\vdots&\vdots&\vdots&\vdots&\vdots&\ddots&\vdots\\
			K_f& K_{2f}&\cdots& K_{f^2}&-1&-1 &1 &1 &1 &\cdots&1 \\
			\hline
			-1 &-1 &\cdots&-1&f&-1 &1&1 &1 &\cdots&1 \\
			-1&-1&\cdots&-1&-1 &f &1&1 &1 &\cdots&1 \\
			\hline
			\sqrt{f}&\sqrt{f}&\cdots &\sqrt{f}&\sqrt{f}&\sqrt{f}&\frac{1}{\sqrt{f}}&\frac{1}{\sqrt{f}} &\frac{1}{\sqrt{f}}&\cdots&\frac{1}{\sqrt{f}}\\
			\hline
			0&0&\cdots&0&0&0&\frac{p}{\sqrt{f}}&\frac{p\xi}{\sqrt{f}}&\frac{p\xi^2}{\sqrt{f}}&\cdots&\frac{p\xi^{p-2}}{\sqrt{f}}\\
			0&0&\cdots&0&0&0&\frac{p}{\sqrt{f}}&\frac{p\xi^2}{\sqrt{f}}&\frac{p\xi^4}{\sqrt{f}}&\cdots&\frac{p\xi^{2(p-2)}}{\sqrt{f}}\\
			\vdots&\vdots&\ddots&\vdots&\vdots&\vdots&\vdots&\vdots&\vdots&\ddots&\vdots\\
			0&0&\cdots&0&0&0&\frac{p}{\sqrt{f}}&\frac{p\xi^{p-2}}{\sqrt{f}}&\frac{p\xi^{2(p-2)}}{\sqrt{f}}&\cdots&\frac{p\xi^{(p-2)^2}}{\sqrt{f}}\\
		\end{array}
		\right)
	\end{equation*}
	and it has the property that $T_iU = UD_i$.
	In particular, the $k$th column of $U$ is an eigenvector of $T_i$
	corresponding to the $k$th diagonal entry of $D_i$.  
	
	In light of the block upper-triangular structure
	of $U$ and the block of zeros in the upper-right of $T_i$, it follows that the equation
	$T_iU = UD_i$ still holds if we truncate all matrices involved to their upper left
	$(p+2) \times (p+2)$ blocks.  We do so in order to remove entries that
	are irrelevant for our purposes and contain no useful information about Kloosterman sums.
	Performing this truncation we now consider instead the $(p+2) \times (p+2)$ matrices
	\begin{equation}\label{eq-TruncatedD}
			\boxed{ D_i = \diag(K_{1i},K_{2i},\ldots,K_{(p-1)i},-1,-1,p-1)  }
	\end{equation}
	and
	\begin{equation}\label{eq-TruncatedT}
		\boxed{
		T_i =  
		\left(
		\begin{array}{cccc|cc|c}
			c_{i,1,1}&c_{i,1,2}&\cdots&c_{i,1,f}&0&0&\sqrt{f}\\
			c_{i,2,1}&c_{i,2,2}&\cdots&c_{i,2,f}&1&1&0\\
			\vdots&\vdots&\ddots&\vdots&\vdots&\vdots&\vdots\\
			c_{i,f,1}&c_{i,f,2}&\cdots&c_{i,f,f}&1&1&0\\
			\hline
			0&1&\cdots&1&0&1&0\\
			0&1&\cdots&1&1&0&0\\
			\hline
			\sqrt{f}&0&\cdots &0&0&0&0\\
		\end{array}
		\right).
		}
	\end{equation}
	Unfortunately, the new $U$ obtained by truncating the original $U$ is no longer unitary.
	However, this can easily be remedied by normalizing the $(p+2)$nd column, leading
	us to redefine $U$ as follows:
	\begin{equation}\label{eq-TruncatedU}
		\boxed{
		U =
		\frac{1}{p}
		\left(
		\begin{array}{cccc|cc|c}
			K_1&K_2&\cdots&K_f&-1&-1&\sqrt{f}\\
			K_2&K_4&\cdots&K_{2f}&-1&-1&\sqrt{f}\\
			\vdots&\vdots&\ddots&\vdots&\vdots&\vdots&\vdots\\
			K_f&K_{2f}&\cdots&K_{f^2}&-1&-1&\sqrt{f}\\
			\hline
			-1&-1&\cdots&-1&f&-1&\sqrt{f}\\
			-1&-1&\cdots&-1&-1&f&\sqrt{f}\\
			\hline
			\sqrt{f}&\sqrt{f}&\cdots &\sqrt{f}&\sqrt{f}&\sqrt{f}&1\\
		\end{array}
		\right).
		}
	\end{equation}	
	In summary, the truncated matrices \eqref{eq-TruncatedD}, \eqref{eq-TruncatedT},
	and \eqref{eq-TruncatedU} satisfy $T_iU=  UD_i$ for $1 \leq i \leq p-1$.

\section{Submatrices of the $T_i$}\label{SectionSubmatrices}

For $i = 1,2,\ldots,p-1$ we let $B_i = (c_{i,j,k})_{j,k=1}^{p-1}$ denote the upper left
$(p-1) \times (p-1)$ submatrix of $T_i$ \eqref{eq-TruncatedT}.  In this section we examine the structure of these
matrices.  Some of these properties will be used in Section \ref{SectionApplications} to study Kloosterman sums
and in Section \ref{SectionRamanujan} to construct Ramanujan multigraphs.

\subsection{Computing the entries}

	We claim that the entries of the $B_i$ are given by
	\begin{equation}\label{eq-cijk}
		\boxed{ c_{i,j,k} = 1 + \left( \frac{\beta(i,j,k)}{p} \right) }
	\end{equation}
	where 
	\begin{equation}\label{eq-Beta}
		\boxed{ \beta(i,j,k) = i^2+j^2+k^2-2ij-2jk-2ik }
	\end{equation}
	and $(\frac{\cdot}{p})$ denotes the \emph{Legendre symbol}
	\begin{equation*}
		\left( \frac{a}{p} \right) = 
		\begin{cases}
			-1 & \text{if $a$ is a quadratic nonresidue modulo $p$}, \\
			0& \text{if $a \equiv 0\!\! \!\!\pmod{p}$}, \\
			1 & \text{if $a$ is a quadratic residue modulo $p$}.
		\end{cases}
	\end{equation*}
	In particular, it follows that $c_{i,j,k} \in \{0,1,2\}$ for all $1 \leq i,j,k\leq p-1$.  
	In light of \eqref{eq-cijk} and \eqref{eq-Beta}, we also have
	\begin{equation}\label{eq-Permutation}
		c_{i,j,k} = c_{\sigma(i),\sigma(j),\sigma(k)}
	\end{equation}
	for any permutation $\sigma$ of $\{i,j,k\}$ and
	\begin{equation}\label{eq-Multiply}
		c_{i,j,k} = c_{li,lj,lk}
	\end{equation}
	for $1 \leq i,j,k,l \leq p-1$ (here the subscripts $li,lj,lk$ are considered modulo $p$).
	Let us now justify the formula \eqref{eq-cijk} for the entries of $B_i$.

	According to Lemma \ref{LemmaPowerful}, the entries $c_{i,j,k}$ 
	denote the number of solutions $(X,Y) \in \CC_i \times \CC_j$ to the equation $XY =Z$
	for some fixed $Z \in \CC_k$.  For $1 \leq i,j,k \leq p-1$ we consider the equation	
	\begin{equation}\label{eq-MysteryClassEquation}
		\underbrace{\megamatrix{1}{ix}{0}{1}{1}{x^{-1}}{0}{1} }_{X \in \CC_i}
		\underbrace{ \megamatrix{1}{jy}{0}{1}{1}{y^{-1}}{0}{1} }_{Y \in \CC_j}
		=
		\underbrace{ \megamatrix{1}{k}{0}{1}{1}{1}{0}{1} }_{Z \in \CC_k},
	\end{equation}
	which instantly reveals that $x^{-1} + y^{-1} = 1$ and $ix +jy = k$.  
	Note that the first equation ensures that $x,y \neq 1$ so that $y = x(x-1)^{-1}$.  
	Substituting this into the second equation we obtain the quadratic 
	\begin{equation}\label{eq-QuadraticBad}
		i x^2 + (j-k-i)x + k =0,
	\end{equation}
	which has either $0$, $1$, or $2$ solutions in $\Z / p \Z$.  Since $k \neq 0$,
	it also follows that every solution $x$ to \eqref{eq-QuadraticBad} belongs to 
	$(\Z / p \Z)^{\times}$ and hence there is a bijective correspondence between solutions $(X,Y) \in \CC_i \times \CC_j$
	to \eqref{eq-MysteryClassEquation} and solutions $x \in \Z / p \Z$ to \eqref{eq-QuadraticBad}.
	Substituting $(2i)^{-1}[x - (j-k-i)]$ for $x$ reveals that
	\eqref{eq-QuadraticBad} has the same number of solutions as 
	\begin{equation*}
		x^2 = (j-k-i)^2 - 4ik = \beta(i,j,k)
	\end{equation*}
	where the function $\beta(i,j,k)$ is defined by \eqref{eq-Beta}.  This establishes \eqref{eq-cijk}.

	Before proceeding, we should remark that the appearance of the preceding quadratic 
	is not surprising when one considers the well-known formula 
	\begin{equation}\label{eq-Alternate}
		K(u) = \sum_{n=0}^{p-1} \left( \frac{n^2 - 4u}{p} \right) \zeta^n,
	\end{equation}
	which can be found in \cite[Lem.~1.1]{Fisher}, \cite[eq.~(1.6)]{Lehmer}, or \cite[eq.~(51)]{Salie}.

\subsection{Rows and columns}
	Fix $1 \leq i,k \leq p-1$ and note that as $X$ runs over the $p-1$ elements of $\CC_i$,
	the variable $Y = X^{-1} Z$ runs over $f$ distinct elements of ${\bf G}$.  Therefore the sum 
	of the $k$th column of $M_i$ must equal $f$.  In light of \eqref{eq-M1}, we obtain the following
	formula for the column sums of the $B_i$:
	\begin{equation}\label{eq-RowSum}
	 	\boxed{
		\sum_{j=1}^{p-1} c_{i,j,k} =
		\begin{cases}
			p-2 & \text{if $k=i$},\\
			p-3 & \text{if $k \neq i$}.
		\end{cases}
		}
	\end{equation}
	By symmetry, the same formula holds for the row sums of $B_i$.  

	For $1\leq i,j\leq p-1$ fixed we have
	\begin{equation}\label{eq-QuadraticJK}
		\beta(i,j,k)= k^2 - 2(i+j)k + (i-j)^2, 
	\end{equation}
	which we now consider as a quadratic in the variable $k$.  By \eqref{eq-cijk} it follows that
	$c_{i,j,k} = 1$ if and only if $k \in (\Z/p\Z)^{\times}$ is a root of the preceding quadratic.
	Since $i$ and $j$ are fixed, this holds for at most two values of $k$.
	Therefore each row (or column) of $B_i$ can contain at most two $1$'s. Let us be more specific.
	\begin{itemize}\addtolength{\itemsep}{0.5\baselineskip}
		\item	Since $p-2$ is odd, it follows from \eqref{eq-RowSum} that the $i$th
			row of $B_i$ contains exactly one $1$.  The remaining $p-2$ entries of
			the $i$th row are $0$'s and $2$'s which add up to $p-3$ by \eqref{eq-RowSum}.  
			Thus exactly $\frac{p-3}{2}$ of these entries are $2$'s and $\frac{p-1}{2}$ of them are $0$'s.	
		\item	Since $p-3$ is even, it follows from \eqref{eq-RowSum} that for $j \neq i$ the $j$th row
			of $B_i$ contains either zero or two $1$'s.  If the $j$th row contains zero $1$'s, then
			$\frac{p-3}{2}$ of its entries must be $2$'s.  If the $j$th row contains two $1$'s, then
			$\frac{p-5}{2}$ of its entries must be $2$'s.  
		\item Now suppose that $j\neq i$.  We claim that if $ij$ is a quadratic residue modulo $p$, then the $j$th row of $B_i$
			contains exactly two $1$'s and zero $1$'s otherwise.  The only way to obtain a $1$
			in the $j$th column of $B_i$ is for \eqref{eq-QuadraticJK} to be congruent to $0$ modulo $p$
			for some $1 \leq k \leq p-1$.  Using the substitution $k \mapsto i+j+k \pmod{p}$, we see that this occurs
			if and only if
			\begin{equation}\label{eq-NewQuadratic}
				k^2 \equiv 4 ij \pmod{p}
			\end{equation}
			for some $k$ in $\{0,1,2,\ldots,i+j-1,i+j+1,\ldots,p-1\}$.  The forbidden value $i+j$ poses no problem since if
			$(i+j)^2 \equiv 4ij \pmod{p}$, then $p|(i-j)$ whence $j = i$, contradicting our hypothesis that $j \neq i$.
			Thus \eqref{eq-NewQuadratic} has a solution $k \neq i+j$ if and only if $ij$ is a quadratic residue modulo $p$.	
	\end{itemize}

	Putting this all together, we obtain Table \ref{TableRows}, which describes the number of elements of each type
	in a given row/column of $B_i$.
	Using this data and the fact that there are exactly $\frac{p-3}{2}$ nonzero quadratic residues of the form $ij$ ($j \neq i$)
	and $\frac{p-1}{2}$ nonresidues we can compute the total number of 
	$0,1,2$'s in the matrix $B_i$ (see Table \ref{TableTotal}).  
	We can also use this information to compute the sum of the squares of the entries of 
	$B_i$ (i.e., the quantity $\tr B_i^*B_i = \tr B_i^2$):   
	\begin{align}
		\tr B_i^2 
		&= (p-2) + 2(p-2)(p-3) \nonumber\\
		&= 2p^2 - 9p + 10. \label{eq-BSOS}
	\end{align}

	\begin{table}
		\begin{equation*}
			\begin{array}{|l||c|c|c|}
			\hline
			\text{Row \#} & \text{\#0's} & \text{\#1's} & \text{\#2's} \\
			\hline
			j=i & \frac{p-1}{2} & 1 & \frac{p-3}{2} \\
			\hline
			j\neq i, (\frac{ij}{p})=1 & \frac{p-1}{2}& 2 & \frac{p-5}{2}\\
			\hline
			j \neq i, (\frac{ij}{p})=-1 & \frac{p+1}{2} & 0 & \frac{p-3}{2}\\
			\hline
			\end{array}
		\end{equation*}
		\caption{\footnotesize Number of elements of each type in a given row of $B_i$.  By symmetry, the same data applies
		to the columns of $B_i$.}
		\label{TableRows}
	\end{table}

	\begin{table}
		\begin{equation*}
			\begin{array}{|l||c|c|c|}
				\hline
				&\text{\#0's} & \text{\#1's} & \text{\#2's} \\
				\hline\hline
				\text{Total} & \frac{1}{2}p(p-1) & p-2 & \frac{1}{2}(p-2)(p-3) \\
				\hline
			\end{array}
		\end{equation*}
		\caption{\footnotesize Total number of elements of each type in the matrix $B_i$.  For large $p$
		the entries are roughly evenly split between $0$'s and $2$'s (i.e., approximately $\frac{1}{2}p^2$.  
		On the other hand, the total number of $1$'s in the matrix is only of order $p$.}
		\label{TableTotal}
	\end{table}

\subsection{Magical properties}\label{SubsectionMagic}
	Along the main diagonal of $B_i$ we have $j = k$ so that $c_{i,j,j} = 1+(\frac{i^2-4ij}{p})$.
	For $j = 1,2,\ldots,p-1$, this yields the sequence
	\begin{equation}\label{eq-ModuloSequence}
		i^2-4i,\,\, i^2 - 8i,\,\, \ldots, \,\, i^2 - 4i(p-1) \pmod{p}.
	\end{equation}
	Note that the sequence $i^2 - 4 ij = i(i - 4j)$ cannot assume the value $i^2$ since $p \nmid 4j$.
	On the other hand, $i(i - 4j)$ assumes every other value in $\Z / p\Z$ exactly once.
	Thus we conclude that $0$ appears in the sequence \eqref{eq-ModuloSequence} exactly once.  
	Therefore exactly one of the diagonal entries of $B_i$ is equal to $1$.  Since $B_i$ is symmetric, it follows that
	there are an odd number of $1$'s among its entries, in agreement with the data in Table \ref{TableTotal}.

	The trace of $B_i$ is easily computed using the above.  Since \eqref{eq-ModuloSequence}
	assumes every value in $(\Z/p\Z)^{\times}$ apart from $1$, it follows that there are precisely
	$\frac{p-3}{2}$ nonzero quadratic residues on the list.  Since we already know that a single $1$ appears
	on the diagonal of $B_i$ it follows that
	\begin{equation}\label{eq-TraceCondition}
		\tr B_i = p-2.
	\end{equation}

	Next we observe that certain ``broken diagonals'' of $B_i$ also enjoy curious summation properties.
	Indeed, using \eqref{eq-Permutation}
	and \eqref{eq-Multiply} for $j$ and $k$ fixed we have
	\begin{equation}\label{eq-Broken}
		\sum_{l=1}^{p-1} c_{i,lj,lk} 
		=  \sum_{l=1}^{p-1} c_{il^{-1},j,k}  
		= \sum_{r=1}^{p-1} c_{r,j,k} 
		= \sum_{r=1}^{p-1} c_{k,j,r} \\
		=
		\begin{cases}
			p-2 & \text{if $j = k$},  \\
			p-3 & \text{if $j \neq k$},
		\end{cases}
	\end{equation}
	by \eqref{eq-RowSum}.  
	Define an equivalence relation $\sim$ on pairs $(j,k)$ with $1 \leq j,k \leq p-1$
	by setting $(j_1,k_1) \sim (j_2,k_2)$ if and only if $j_1 = l j_2 \pmod{p}$ and $k_1 = l k_2 \pmod{p}$
	for some $1 \leq l \leq f$.  This partitions the indices $(j,k)$ into $p-1$ equivalence classes of $p-1$ elements
	each, the sum over each equivalence class being given by the preceding formula.

	Putting this all together we obtain two different ``magic matrices'' that are naturally associated to $B_i$.
	First observe from \eqref{eq-RowSum}, \eqref{eq-TraceCondition}, and \eqref{eq-Broken} that if we subtract $1$
	from $c_{i,i,i}$ we obtain a new matrix $B_i'$ for which each row, column, diagonal, and ``broken diagonal'' sums to $p-3$.
	For instance, if $p = 7$ and we subtract $1$ from the $(1,1)$ entry of $B_1$ we obtain the $6 \times 6$ ``magic'' matrix
	\marginpar{\tiny \textbf{Note}: These matrices are best viewed in color (each broken diagonal is represented using a different color).}
	\begin{equation*} 
		\left(
		\begin{array}{cccccc}
			 \color{red}{\bf 1} & \color{blue}{\bf 0} & \color{yellow}{\bf 2} & \color{cyan}{\bf 1} & \bf 0 & \color{green}{\bf 0} \\
			 \color{cyan}{\bf 0} & \color{red}{\bf 1} & \bf 0 & \color{blue}{\bf 1} & \color{green}{\bf 0} & \color{yellow}{\bf 2} \\
			\bf  2 & \color{yellow}{\bf 0} & \color{red}{\bf 0} & \color{green}{\bf 2} & \color{cyan}{\bf 0} & \color{blue}{\bf 0} \\
			 \color{blue}{\bf 1} & \color{cyan}{\bf 1} & \color{green}{\bf 2} & \color{red}{\bf 0} & \color{yellow}{\bf 0} & \bf 0 \\
			 \color{yellow}{\bf 0} & \color{green}{\bf 0} & \color{blue}{\bf 0} & \bf 0 & \color{red}{\bf 2} & \color{cyan}{\bf 2} \\
			\color{green}{\bf 0} & \bf 2 & \color{cyan}{\bf 0} & \color{yellow}{\bf 0} & \color{blue}{\bf 2} & \color{red}{\bf 0}
		\end{array}
		\right),
	\end{equation*}
	each row, column, and broken diagonal of which sums to $4$.
	
	We can also augment the $(p-1) \times (p-1)$ matrix $B_i$ with one additional row and column
	from the larger matrix $T_i$ to obtain a $p \times p$ matrix $A_i$ which also enjoys ``magic square'' properties
	(i.e., $A_i$ is the upper-left $p \times p$ principal submatrix of $T_i$).  In particular,
	each row and column of $A_i$ sums to $p-2$ while each diagonal and ``broken diagonal'' sums to $p-3$.
	For $p=11$, we obtain the $11 \times 11$ ``magical'' submatrix $A_1$ of $T_1$
	\begin{equation*}
		\left(
		\begin{array}{cccccccccc|c}
			 0 & 0 & \color{red}{\bf 0} & 1 & 2 & \color{green}{\bf 2} & 0 & 0 & 2 & \color{blue}{\bf 2} & \color{cyan}{\bf 0}\\
			\color{green}{\bf 0} & 2 & 2 & 2 & 0 & \color{red}{\bf 2} & 0 & 0 & \color{blue}{\bf 0} & 0 & \color{cyan}{\bf 1}\\
			 0 & 2 & 1 & 0 & 1 & 2 & \color{green}{\bf 0} & \color{blue}{\bf 2} & \color{red}{\bf 0} & 0 & \color{cyan}{\bf 1}\\
			 \color{red}{\bf 1} & \color{green}{\bf 2} & 0 & 0 & 0 & 0 & \color{blue}{\bf 0} & 2 & 1 & 2 & \color{cyan}{\bf 1}\\
			 2 & 0 & 1 & \color{red}{\bf 0} & 2 & \color{blue}{\bf 0} & 2 & \color{green}{\bf 0} & 1 & 0 & \color{cyan}{\bf 1}\\
			 2 & 2 & \color{green}{\bf 2} & 0 & \color{blue}{\bf 0} & 0 & \color{red}{\bf 2} & 0 & 0 & 0 & \color{cyan}{\bf 1}\\
			 0 & 0 & 0 & \color{blue}{\bf 0} & 2 & 2 & 0 & 2 & \color{green}{\bf 0} & \color{red}{\bf 2} & \color{cyan}{\bf 1}\\
			 0 & \color{red}{\bf 0} & \color{blue}{\bf 2} & \color{green}{\bf 2} & 0 & 0 & 2 & 0 & 2 & 0 & \color{cyan}{\bf 1}\\
			 2 & \color{blue}{\bf 0} & 0 & 1 & \color{red}{\bf 1} & 0 & 0 & 2 & 2 & \color{green}{\bf 0} & \color{cyan}{\bf 1}\\
			 \color{blue}{\bf 2} & 0 & 0 & 2 & \color{green}{\bf 0} & 0 & 2 & \color{red}{\bf 0} & 0 & 2 & \color{cyan}{\bf 1}\\
			 \hline
			 0 & 1 & 1 & 1 & 1 & 1 & 1 & 1 & 1 & 1 & \color{cyan}{\bf 0}
		\end{array}
		\right).
	\end{equation*}
	In particular, note that each row and column sums to $9$ while each broken diagonal sums to $8$.
	

\subsection{Qualitative behavior of eigenvalues}
	By the triangle inequality one obtains the trivial bound $| K(a,b) | \leq p-1$ for all $a,b$.  However, 
	a significant amount of cancellation can occur in the sum \eqref{eq-KloostermanDefinition}.  
	The famous \emph{Weil bound} asserts that
	\begin{equation}\label{eq-Weil}
		|K(a,b)| \leq 2 \sqrt{p},
	\end{equation}
	whenever $p \nmid ab$ \cite{Weil}.  A complete proof, based on Stepanov's method \cite{Stepanov},
	can be found in the recent text \cite[Thm.~11.11]{Iwaniec}.	

	For a $n \times n$ real symmetric matrix $X$ we let
	\begin{equation*}
		\lambda_0(X) \leq \lambda_1(X) \leq \cdots \leq \lambda_{n-1}(X)
	\end{equation*}
	denote the eigenvalues of $X$, repeated according to multiplicity.  We are concerned
	here with the qualitative behavior of the eigenvalues of the $(p+2)\times (p+2)$ matrix 
	$T :=T_1$ \eqref{eq-TruncatedT} and its $p \times p$ upper-left principal submatrix $A := A_1$.
				
	By the Weil bound \eqref{eq-Weil} and a standard result relating the eigenvalues of a hermitian matrix to those of a principal
	submatrix \cite[Thm.~4.3.15]{HJ} we have 	
	\begin{equation*}
		-2\sqrt{p} \,\leq \, \lambda_j(T) \,\leq \, \lambda_j(A)\, \leq\, \lambda_{j+2}(T) \,\leq \,2 \sqrt{p}
	\end{equation*}
	for $0 \leq j \leq p-2$.  In particular, it follows from \eqref{eq-Weil}
	and the preceding chain of inequalities that
	\begin{equation}\label{eq-Chain01}
		-2\sqrt{p} \,\leq\, \lambda_0(T) \,\leq\, \lambda_0(A) \,\leq\, \lambda_2(T) 
	\end{equation}
	and
	\begin{equation}\label{eq-Chain02}
		\lambda_{p-1}(T) \,\leq\, \lambda_{p-1}(A) \,\leq\, \lambda_{p+1}(T) \,\leq\, 2 \sqrt{p}.
	\end{equation}	
	Using the Weil bound, we now write 
	\begin{equation*}
		K(u) = 2\sqrt{p} \cos \theta_p(u)
	\end{equation*}
	where $\theta_p(u) \in [0,\pi]$.
	The \emph{vertical Sato-Tate law} \cite{Adolphson, KatzBook} states 
	that as $p \to \infty$ the sequence of angles $\theta_p(u)$ becomes equidistributed with respect
	to the \emph{Sato-Tate measure} $\mu = \frac{2}{\pi} \sin^2\theta\,d\theta$ on $[0,\pi]$.
	Thus for any fixed $\delta > 0$ there are at least three values of $\theta_p(u)$ in each of the intervals
	$[0,\delta]$ and $[\pi-\delta , \pi$] when $p$ is sufficiently large.  In light of \eqref{eq-Chain01} and \eqref{eq-Chain02},
	we see that $\lim_{p\to\infty} \lambda_0(A) = -2\sqrt{p}$ and $\lim_{p\to\infty} \lambda_{p-2}(A) = 2 \sqrt{p}$.
	This behavior is clearly reflected in Table \ref{TableEigenvalues}, even for relatively small
	values of $p$.

	\begin{table}
		\begin{equation*}\footnotesize
			\begin{array}{| r || l | l | l || l | l | l | }
			\hline
			p & -2\sqrt{p} & \lambda_0(T) & \lambda_0(A) & \lambda_{p-2}(A) 
				& \lambda_{p+1}(T) & 2 \sqrt{p} \\
			\hline\hline
			7   & -5.2915  &  -2.69202 &   -2.55594   & \bf 3.87311 &   4.49396  &  5.2915\\
			\hline
			11 &   -6.63325  &  -5.71695&   \bf  -5.3493   & 4.48588  &  4.79575  &  6.63325\\
			\hline
			29  &  -10.7703  &  -9.50028  &\bf   -9.43532  &  8.89626  &  9.06824  &  10.7703\\
			\hline
			71  &  -16.8523  &  -15.8699  & \bf  -15.8149  &  14.1059  &  14.1728  &  16.8523\\
			\hline
			113  &  -21.2603  &  -20.9713  &\bf   -20.8836  &  19.5715  &  19.6731  &  21.2603\\
			\hline
			229  &  -30.2655   & -29.8296  &  -29.75   & \bf 29.9351 &   30.0001   & 30.2655\\
			\hline
			379  &  -38.9358  &  -38.2481  &\bf   -38.2008   & 37.4232 &   37.4756  &  38.9358\\
			\hline
			541  &  -46.5188  &  -46.4712  & \bf  -46.4221   & 46.3519  &  46.3885  &  46.5188\\
			\hline
			863   & -58.7537  &  -58.5638  & \bf  -58.5258   & 57.613   & 57.6483 &   58.7537\\
			\hline
			1223  & -69.9428  & -67.6103 &  -67.5843 &\bf   69.0147 &  69.0451 &  69.9428 \\
			\hline
			1583  &  -79.5739  & -79.328 & \bf  -79.3055 &  77.3993 &  77.4206  & 79.5739 \\
			\hline
			1987  & -89.1516  & -88.7625  & -88.7417 &  \bf 88.7745 &  88.7849  & 89.1516 \\
			\hline
			\end{array}
		\end{equation*}
		\caption{\footnotesize The smallest and the second largest eigenvalues of the $(p+2)\times(p+2)$
		matrix $T$ and its $(p-1)\times(p-1)$ principal submatrix $A$.}
		\label{TableEigenvalues}
	\end{table}

	The preceding argument relies upon a deep result of Katz \cite{KatzBook}.
	On the other hand, the matrix $A$ is quite concrete and it enjoys many unusual combinatorial
	properties (Subsection \ref{SubsectionMagic}).  One might hope 
	to estimate the eigenvalues of $A$ directly to obtain an elementary proof
	of a Weil-type bound.  The following result indicates that the error incurred using such an approach would not
	change the order of magnitude of the resulting estimate.
	
	\begin{theorem}
		If $p\geq 5$ is an odd prime, then
		\begin{equation*}
			\max\{ |K(u)|:u=0,1,\ldots,p-1 \} \,\leq\,  \max\{|\lambda_0(A) |,|\lambda_{p-2}(A)| \} +  \sqrt{p-1}.
		\end{equation*}
		In other words, an estimate of the form $ \max\{|\lambda_0(A) |,|\lambda_{p-2}(A)| \} \leq C\sqrt{p}$
		leads to a Weil-type estimate of the form $\max\{ |K(u)| \} \leq C' \sqrt{p}$.
	\end{theorem}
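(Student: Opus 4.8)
The plan is to identify $\max_{0\le u\le p-1}|K(u)|$ with $\max\bigl(|\lambda_0(T)|,|\lambda_p(T)|\bigr)$, where $T:=T_1$ is the $(p+2)\times(p+2)$ matrix \eqref{eq-TruncatedT}, and then to compare these two extreme eigenvalues of $T$ with $\lambda_0(A)$ and $\lambda_{p-2}(A)$ by writing $T$ as $A:=A_1$ bordered by two zero rows and columns plus a symmetric perturbation of operator norm exactly $\sqrt{p-1}$.

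First I would record the spectrum of $T$. By Lemma \ref{LemmaPowerful} and the construction of Section \ref{SectionConstruction}, $T$ is real symmetric and similar to $D_1=\diag(K_1,\dots,K_{p-1},-1,-1,p-1)$; since $K_0=K_p=-1$, the eigenvalues of $T$ form the multiset $\{K(0),K(0),K(1),\dots,K(p-1),p-1\}$. The trivial bound $|K(u)|\le p-1$ shows that $p-1$ is the largest eigenvalue, so $\lambda_p(T)=\max_{0\le u\le p-1}K(u)=:K_{\max}$ and $\lambda_0(T)=\min_{0\le u\le p-1}K(u)=:K_{\min}$. Since $K(0)=-1$ we have $K_{\min}\le-1<0$, and the elementary identity $\sum_{u=0}^{p-1}K(u)=0$ (immediate from $\sum_{u=0}^{p-1}\zeta^{u\overline n}=0$) gives $\sum_{u=1}^{p-1}K(u)=1$, so $K_{\max}>0$. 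Consequently $\max_{0\le u\le p-1}|K(u)|=\max(K_{\max},-K_{\min})=\max\bigl(|\lambda_0(T)|,|\lambda_p(T)|\bigr)$.

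Next I would write $T=\widehat A+R$, where $\widehat A$ is $A$ padded with two zero rows and columns and $R$ holds the remaining entries. Reading the blocks off \eqref{eq-TruncatedT}, the bottom-right $2\times2$ block of $T$ vanishes, so $R=\bigl(\begin{smallmatrix}0&C\\ C^{\mathsf T}&0\end{smallmatrix}\bigr)$ with $C$ the $p\times2$ top-right block of $T$. A direct check gives $C^{\mathsf T}C=(p-1)I_2$: one column of $C$ is a $0/1$ vector with exactly $p-1$ ones, the other has a single nonzero entry equal to $\sqrt{p-1}$, and the two columns are orthogonal. Hence the nonzero eigenvalues of $R$ are $\pm\sqrt{p-1}$, i.e.\ $\lambda_{\max}(R)=\sqrt{p-1}$ and $\lambda_{\min}(R)=-\sqrt{p-1}$. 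Moreover the spectrum of $\widehat A$ is that of $A$ together with two extra zeros, so its second largest eigenvalue is $\lambda_p(\widehat A)=\max(\lambda_{p-2}(A),0)\le|\lambda_{p-2}(A)|$ and its least eigenvalue is $\lambda_0(\widehat A)=\min(\lambda_0(A),0)\ge-|\lambda_0(A)|$.

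Finally, Weyl's inequalities $\lambda_p(T)\le\lambda_p(\widehat A)+\lambda_{\max}(R)$ and $\lambda_0(T)\ge\lambda_0(\widehat A)+\lambda_{\min}(R)$ yield $K_{\max}=\lambda_p(T)\le|\lambda_{p-2}(A)|+\sqrt{p-1}$ and $-K_{\min}=|\lambda_0(T)|\le|\lambda_0(A)|+\sqrt{p-1}$; taking the larger of the two proves the inequality, and the closing assertion about $C\sqrt p$ follows at once by adding $\sqrt{p-1}\le\sqrt p$ to the assumed estimate. I expect the only real work to be bookkeeping: verifying from \eqref{eq-TruncatedT} that the bottom-right $2\times2$ block of $T$ is genuinely zero and that $C^{\mathsf T}C=(p-1)I_2$, and checking that $p-1$---rather than any $K(u)$---sits at the top of the spectrum of $T$, so that $\lambda_p(T)$ and $\lambda_0(T)$ are exactly the extreme Kloosterman values. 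Granted those points, the estimate is a two-line perturbation argument that invokes neither the Weil bound nor the magic-square combinatorics.
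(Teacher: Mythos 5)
Your proof is correct and follows the same decomposition $T = (A \oplus 0_{2\times 2}) + R$ as the paper, then concludes by an eigenvalue perturbation bound — which is precisely the content of the paper's terse ``triangle inequality for the operator norm'' step, made explicit in your case via Weyl's inequalities together with the observations $\lambda_p(\widehat A) \le |\lambda_{p-2}(A)|$ and $\lambda_0(\widehat A) \ge -|\lambda_0(A)|$. Your route to $\norm{R} = \sqrt{p-1}$ (showing $R$ is a bordered matrix with $C^{\mathsf T}C = (p-1)I_2$, hence nonzero spectrum $\pm\sqrt{p-1}$) is cleaner than the paper's block-norm reduction to a $4\times 4$ comparison matrix, but the overall argument is essentially identical.
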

		
	\begin{proof}
		First note that if $A_{ij}$ are block matrices of the appropriate size, then
		\begin{equation*}\footnotesize
			\left\|
			\begin{pmatrix}
				A_{11} & A_{12} & \cdots & A_{1n} \\
				A_{21} & A_{22} & \cdots & A_{2n} \\
				\vdots & \vdots & \ddots & \vdots \\
				A_{m1} & A_{m2} & \cdots & A_{mn}
			\end{pmatrix}
			\right\|
			\,\leq\,
			\left\|
			\begin{pmatrix}
				\|A_{11}\| & \|A_{12}\| & \cdots & \|A_{1n}\| \\
				\|A_{21}\| & \|A_{22}\| & \cdots & \|A_{2n}\| \\
				\vdots & \vdots & \ddots & \vdots \\
				\|A_{m1}\| & \|A_{m2}\| & \cdots & \|A_{mn}\|
			\end{pmatrix}
			\right\|.
		\end{equation*}
		Moreover, if each $A_{ij}$ is a nonnegative multiple of an all $1$'s matrix, then equality holds.  Therefore
		\begin{align*}
			\norm{ T - A\oplus 0_{3\times 3} }
			&= \left\|			\left(\footnotesize
					\begin{array}{c|ccc|c|c}
						0&0&\cdots&0&0&\sqrt{p-1}\\
						\hline
						0&0&\cdots&0&1&0\\
						\vdots&\vdots&\ddots&\vdots&\vdots&\vdots\\
						0&0&\cdots&0&1&0\\
						\hline
						0&1&\cdots&1&0&0\\
						\hline
						\sqrt{p-1}&0&\cdots&0&0&0
					\end{array}
					\right)
					\right\| \\
			&= \left\| \footnotesize
			\begin{pmatrix}
			0 & 0 & 0 & \sqrt{p-1} \\
			0 & 0 & \sqrt{p-2} & 0 \\
			0 & \sqrt{p-2} & 0 & 0\\
			\sqrt{p-1} & 0 & 0 & 0
			\end{pmatrix}
			\right\| \\
			&=\sqrt{p-1}.
		\end{align*}
		The theorem now follows from the triangle inequality for the operator norm.
	\end{proof}

\section{Applications to Kloosterman sums}\label{SectionApplications}
In the following, we employ the matrices $T = T_1$ \eqref{eq-TruncatedT},
$D= D_1$ \eqref{eq-TruncatedD}, and $U$ \eqref{eq-TruncatedU} constructed
in Subsection \ref{SubsectionMain}.  As before, we let $A=A_1$ and $B=B_1$ denote the
upper-left $p \times p$ and $(p-1) \times (p-1)$ principal submatrices of $T$.

\subsection{Basic Kloosterman identities}

	Using the character table of ${\bf G}$ (Table \ref{TableCharacter}) we can derive 
	a number of identities involving Kloosterman sums.  For instance, 
	taking the inner product of the first column with the $(p+2)$th column yields
	\begin{equation}\label{eq-KloostermanTrace}
		\boxed{ \sum_{u=0}^{p-1} K(u) =0. }
	\end{equation}	
	In particular, this gives another proof of \eqref{eq-TraceCondition} since $\tr B = \tr T = \tr D = f - 1 + \sum_{u=0}^{p-1} K(u) = p-2$.

	If $c \neq 0$, then taking the inner product of the first and the $c$th 
	of the columns of the character table leads to
	\begin{equation}\label{eq-KloostermanOrthogonal}
		\boxed{ \sum_{u=0}^{p-1} K(u) K(cu) = -p. }
	\end{equation}
	
	Taking the inner product of the first column with itself we find that
	\begin{align*}
		\sum_{u =0}^{p-1} K^2(u) + p \cdot 1^2
		&= \#C_{\bf G}\left( \minimatrix{1}{1}{0}{1} \oplus \minimatrix{1}{1}{0}{1}\right) \\
		&= \# \left\{ \minimatrix{1}{y}{0}{1} \oplus \minimatrix{1}{z}{0}{1} : y,z \in \Z / p \Z \right\}  \\
		&= p^2,
	\end{align*}
	where $C_{\bf G}(\cdot)$ denotes the centralizer of an element of ${\bf G}$.   This yields the
	well-known formula \cite[eq.~3.7]{Lehmer}:
	\begin{equation}\label{eq-KloostermanSOS}
		\boxed{ \sum_{u=0}^{p-1} K^2(u) = p^2 - p. }
	\end{equation}
	
	Since the matrices $D$ and $T$ (given by \eqref{eq-TruncatedD} and \eqref{eq-TruncatedT}, respectively)
	are unitarily similar, it follows that the sums of the squares of their entries must be equal (i.e.,
	$\tr D^2 = \tr T^2$).  Thus
	\begin{equation}\label{eq-Backward}
		\underbrace{(p^2-p) + (-1)^2 + (p-1)^2 }_{\tr D^2}
		= \underbrace{\tr B^2 + 2(p-1) + 4(p-2)+2 }_{\tr T^2}
	\end{equation}
	by \eqref{eq-KloostermanSOS}.
	The preceding also yields
	$\tr B^2 = 2p^2 - 9p + 10$, which provides another proof of \eqref{eq-BSOS}.  In fact,
	since we already have an independent proof of \eqref{eq-BSOS},
	we could work backward from \eqref{eq-Backward} to
	provide another proof of \eqref{eq-KloostermanSOS}.	

	From \eqref{eq-KloostermanOrthogonal} and \eqref{eq-KloostermanSOS} we obtain
	(for $c \neq 0,1$)
	\begin{align}
		\sum_{u=0}^{p-1} \left[ K(u) - K(cu) \right]^2 &= 2p^2, \label{eq-KSOS-} \\
		\sum_{u=0}^{p-1}\left[ K(u) + K(cu) \right]^2 &= 2p^2 - 4p . \label{eq-KSOS+}
	\end{align}
	Other such quadratic identities (e.g., \cite[eqs.~3.5, 3.8]{Lehmer}) might be deduced 
	from Table \ref{TableCharacter} using the generalized orthogonality relations \cite[Thm.~2.13]{Isaacs}:
	\begin{equation*}
		\frac{1}{ | {\bf G} |} \sum_{G \in {\bf G}} \chi_i(GH) \chi_j(G^{-1}) = \delta_{i,j} \frac{ \chi_i(H) }{ \chi_i(I)}.
	\end{equation*}
	
	Applying \cite[Thm.~30.4]{JL} (see also \cite[Prob.~3.9]{Isaacs}) we obtain the formula
	\begin{equation*}
	c_{i,j,k} = \frac{ | {\bf G} | }{ | C_{ \bf G}(G_i) | | C_{ \bf G}(G_j) | } \sum_{u=1}^{2p} \frac{ \chi_u(G_i) \chi_u(G_j) \overline{\chi_u(G_k)} }{ \chi_u(I)},
	\end{equation*}
	which for $i = 1$ and $1 \leq j,k \leq p-1$ yields
	an identity equivalent to \cite[Prop.~6.2]{Ku75}:
	\begin{equation*}
		\sum_{u=0}^{p-1} K(u)K(ju) K(ku) = \left( \frac{\beta(1,j,k)  }{p} \right) p^2 + 2p
	\end{equation*}
	(this can also be easily deduced by computing the $(j,k)$ entry of $T = UDU$).
	By \eqref{eq-Beta} we have $\beta(1,1,1) = -3$ from which we obtain \cite[eq.~1]{LehmerCubes},
	\cite[eq.~3.22]{Lehmer}, and \cite[eq.~(70)]{Salie}:
	\begin{equation}\label{eq-KloostermanCubic}
		\boxed{
		\sum_{u=0}^{p-1} K^3(u) = 
		\begin{cases}
			p^2 + 2p  & \text{if $p \equiv 1 \pmod{3}$}, \\
			-p^2 + 2p  & \text{if $p \equiv 2 \pmod{3}$}.
		\end{cases}
		}
	\end{equation}
	
\subsection{Quartic formulas and Kloosterman's bound}
	Computing the $(j,j)$ entry of $T^2 = UD^2U$, we obtain
	\begin{equation*}
		\underbrace{ \frac{1}{p^2} \left( \sum_{u=0}^{p-1} K(u)^2 K(ju)^2 +1 + f^3 \right) }_{\text{$(j,j)$ entry of $UD^2U$}}
		= 
		\underbrace{
		\begin{cases}
			1 + 2(p-3) + f & \text{if $j = 1$},\\
			2 +2(p-5) +2 & \text{if $j \neq 1$ and $(\frac{j}{p})=1$},\\
			2(p-3) +2 & \text{if $j \neq 1$ and $(\frac{j}{p})=-1$},\\
		\end{cases}
		}_{\text{$(j,j)$ entry of $T^2$ obtained from Table \ref{TableRows}}}
	\end{equation*}
	leading us to \cite[Prop.~6.3]{Ku75} and \cite[eq.~3.18]{Lehmer}:
	\begin{equation}\label{eq-KloostermanQuartic}
		\boxed{
		\sum_{u=0}^{p-1} K^2(u) K^2(ju)
		= 
		\begin{cases}
			2p^3 - 3 p ^2 - 3p  & \text{if $j=1$},\\
			p^3 - 3 p^2 - 3p  & \text{if $j \neq 1$ and $(\frac{j}{p})=1$},\\
			p^3 - p^2 - 3p  & \text{if $j \neq 1$ and $(\frac{j}{p})=-1$}.
		\end{cases}
		}
	\end{equation}
	Based upon this we obtain the following result of Kloosterman himself \cite{Kloosterman}.
	
	\begin{theorem}[Kloosterman]\label{TheoremKloosterman}
		$|K(u)| < 2^{\frac{1}{4}} p^{\frac{3}{4}}$ for all $u$.
	\end{theorem}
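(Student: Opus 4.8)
The plan is to exploit two facts: that $K(u)$ is real (so that $K^4(u) \ge 0$ for every $u$), and the quartic identity \eqref{eq-KloostermanQuartic}. Specializing \eqref{eq-KloostermanQuartic} to $j = 1$ gives the closed form
\begin{equation*}
	\sum_{u=0}^{p-1} K^4(u) = 2p^3 - 3p^2 - 3p,
\end{equation*}
and essentially everything else is a one-line estimate.

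Concretely, I would fix an arbitrary $u_0 \in \{0,1,\ldots,p-1\}$ and observe that, because each summand is nonnegative, the single term $K^4(u_0)$ is dominated by the whole sum:
\begin{equation*}
	K^4(u_0) \,\le\, \sum_{u=0}^{p-1} K^4(u) \,=\, 2p^3 - 3p^2 - 3p \,<\, 2p^3,
\end{equation*}
the final inequality holding simply because $3p^2 + 3p > 0$. Taking fourth roots yields $|K(u_0)| < (2p^3)^{1/4} = 2^{1/4} p^{3/4}$, and since $u_0$ was arbitrary this is exactly the claimed bound.

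There is no genuine obstacle left: the substantive work is \eqref{eq-KloostermanQuartic} itself, which has already been derived from the character table and the row-count data of Table \ref{TableRows} (equivalently, by reading off the $(1,1)$ entry of $T^2 = UD^2U$). If one wanted to squeeze out a marginally better constant, one could subtract the guaranteed contribution $K^4(0) = 1$ (recall the convention $K_0 = -1$) before taking fourth roots, but this affects nothing of importance. The only bookkeeping point to confirm is that \eqref{eq-KloostermanQuartic} applies for the primes in the statement, which it does, since it was established under the standing hypothesis $p > 3$.
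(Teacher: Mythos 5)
Your proof is correct and is exactly the paper's argument: specialize \eqref{eq-KloostermanQuartic} to $j=1$, use nonnegativity of $K^4(u)$ to bound any single term by the full sum $2p^3 - 3p^2 - 3p < 2p^3$, and take fourth roots.
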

	
	\begin{proof}
		Let $j = 1$ in \eqref{eq-KloostermanQuartic}, observe that $|K(u)|^4 < 2p^3$, then take fourth roots.
	\end{proof}
	
	We remark that the simple proof above achieves a better constant 
	(namely $2^{\frac{1}{4}}$ in place of $3^{\frac{1}{4}}$ -- see also \cite[eq.~72]{Salie}) than the recent proof in \cite{DRHB}.  
	
	Although it is not clear whether one can obtain the Weil bound \eqref{eq-Weil} using these methods, we
	have at least demonstrated that the unitary similarity $A= UDU^*$ encodes enough information
	about Kloosterman sums to obtain nontrivial results.  Furthermore, we can establish that the exponent $\frac{1}{2}$
	appearing in the Weil bound cannot be improved.  Indeed, from 
	\eqref{eq-KloostermanSOS} and \eqref{eq-KloostermanQuartic} we have
	\begin{align*}
		2p^3 - 3p^2 - 3p
		&= \sum_{u=0}^{p-1} K(u)^4 
		\leq \max\{ K(u)^2 \} \sum_{u=0}^{p-1} K^2(u) \\
		&\leq \max\{ K(u)^2\} (p^2-p)
	\end{align*}
	whence
	\begin{equation*}
		\max\{ K(u)^2 \} \geq \frac{2p^2-3p-3}{p-1} =
		2p-2 + \frac{p-5}{p-1} > 2(p-1).
	\end{equation*}
	In other words, there exists some $u$ such that
	\begin{equation}\label{eq-KloostermanMinimum}
		|K(u)| \geq \sqrt{2}(p-1)^{\frac{1}{2}}.
	\end{equation}
	
%
%
%
	
\subsection{Symmetric functions of Kloosterman sums}
	Recall that the coefficients $c_j$ in the expansion 
	\begin{equation*}
		\det(X - \lambda I) = c_0 \lambda ^n + c_1 \lambda ^{n-1} + \cdots + c_n
	\end{equation*}
	of the characteristic polynomial of a $n\times n$ matrix $X$ are given by B\^ocher's recursion
	\begin{equation*}
	c_0 =1, \qquad c_j = -\frac{1}{j} \left[c_{j-1} \tr X + c_{j-2}\tr X^2 + \cdots +c_0 \tr X^j \right].
	\end{equation*}
	Applying this procedure to the diagonal matrix $X = \diag(K_0,K_1,\ldots,K_{p-1})$ and using 
	\eqref{eq-KloostermanTrace}, \eqref{eq-KloostermanSOS}, \eqref{eq-KloostermanCubic}, and
	\eqref{eq-KloostermanQuartic}, 
	we obtain $c_0 = 1$, $c_1 = 0$,
	\begin{align*}
		c_2 &= \tfrac{1}{2}(\tr^2 X - \tr X^2 )\\ &= -\tfrac{1}{2}(p^2-p) ,\\[5pt]
		c_3 &= - \tfrac{1}{6}\left[ (\tr X)^3 + 2 \tr X^3 - 3 (\tr X)( \tr X^2)\right] \\
		&= - \frac{p}{3}\left[ \left(\frac{-3}{p} \right)p +2\right] ,\\[5pt]
		c_4 &= \tfrac{1}{24} \left[(\tr X)^4 -6 (\tr X)^2(\tr X^2) +3 (\tr X^2)^2+8 (\tr X)(\tr X^3) -6 \tr X^4\right] \\
		&=\tfrac{1}{8}p(p-3)(p^2-3p-2).
	\end{align*}
	We therefore obtain
	\begin{equation}\label{eq-CharacteristicPolynomial}
		\boxed{
		\prod_{u=0}^{p-1} (\lambda  - K_u) 
		= \lambda ^p - \frac{1}{2}(p^2-p)\lambda ^{n-2} -  \frac{p}{3}\left[ \left(\frac{-3}{p} \right)p +2\right] \lambda ^{n-3} + \cdots,
		}
	\end{equation}
	which agrees with \cite[p.~403]{Lehmer}.  The preceding now yields formulas for certain symmetric functions
	of Kloosterman sums:
	\begin{align*}
		\sum_{0\leq j<k\leq p-1} K_jK_k &= -\frac{1}{2}(p^2-p), \\
		\sum_{0\leq i<j<k\leq p-1} K_iK_jK_k &=  \frac{p}{3}\left[ \left(\frac{-3}{p} \right)p +2\right] , \\
		\sum_{0\leq i<j<k<l \leq p-1} K_i K_j K_k K_l &= \tfrac{1}{8}p(p-3)(p^2-3p-2).
	\end{align*}

	A different approach to \eqref{eq-CharacteristicPolynomial} can be based
	upon the fact that the matrix
	\begin{equation*}
		X =\small
		\left(
		\begin{array}{cccc|cc|c}
			c_{1,1,1}&c_{1,1,2}&\cdots&c_{1,1,f}&0&0&f\\
			c_{1,2,1}&c_{1,2,2}&\cdots&c_{1,2,f}&1&1&0\\
			\vdots&\vdots&\ddots&\vdots&\vdots&\vdots&\vdots\\
			c_{1,f,1}&c_{1,f,2}&\cdots&c_{1,f,f}&1&1&0\\
			\hline
			0&1&\cdots&1&0&1&0\\
			0&1&\cdots&1&1&0&0\\
			\hline
			1&0&\cdots &0&0&0&0\\
		\end{array}
		\right)
	\end{equation*}
	is similar to the diagonal matrix $D = \diag(K_1,K_2,\ldots,K_f,-1,-1,f)$ \eqref{eq-TruncatedD}.
	Indeed, the first matrix is similar to the truncated matrix $T$ \eqref{eq-TruncatedT}, which is itself
	unitarily similar to $D$.  Using the fact that one may add a multiple of one row (resp.~column)
	to another inside a determinant, one easily obtains \cite[Lem.~14]{Ku75}:

	\begin{theorem}
		The Kloosterman sums $K_0,K_1,K_2,\ldots,K_f$ are precisely the eigenvalues of the matrix
		\begin{equation*}
				\left(\small
					\begin{array}{cccc|c}
						c_{1,1,1}-f&c_{1,1,2}-f&\cdots&c_{1,1,f}-f&-f\\
						c_{1,2,1}&c_{1,2,2}&\cdots&c_{1,2,f}&1\\
						\vdots&\vdots&\ddots&\vdots&\vdots\\
						c_{1,f,1}&c_{1,f,2}&\cdots&c_{1,f,f}&1\\
						\hline
						0&2&\cdots&2&1\\
					\end{array}
				\right),
		\end{equation*}
		where the coefficients $c_{i,j,k}$ are defined by \eqref{eq-cijk} and $f = p-1$.
	\end{theorem}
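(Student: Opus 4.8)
The plan is to compute the characteristic polynomial $\det(\lambda I_p-Y)$ and show it equals $\prod_{u=0}^{p-1}(\lambda-K_u)$; since $K_0=K_p=-1$, this is the same as $(\lambda+1)\prod_{u=1}^{p-1}(\lambda-K_u)$, which identifies the eigenvalues of $Y$ as precisely $K_0,K_1,\dots,K_f$, where $f=p-1$. The starting point is the $(p+2)\times(p+2)$ matrix $X$ displayed just before the statement: as observed there, $X$ is similar to $D=\diag(K_1,\dots,K_f,-1,-1,f)$, and hence
\[
\det(\lambda I_{p+2}-X)=(\lambda-f)(\lambda+1)^2\prod_{u=1}^{f}(\lambda-K_u).
\]
I would then prove separately, using only elementary row/column operations inside the determinant, that
\[
\det(\lambda I_{p+2}-X)=(\lambda+1)(\lambda-f)\,\det(\lambda I_p-Y);
\]
comparing the two displays and cancelling the polynomial $(\lambda+1)(\lambda-f)$ gives $\det(\lambda I_p-Y)=(\lambda+1)\prod_{u=1}^{f}(\lambda-K_u)=\prod_{u=0}^{p-1}(\lambda-K_u)$, which is the assertion.

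The second display is obtained in three reductions. First, in $\lambda I_{p+2}-X$ the two columns indexed $p$ and $p+1$ agree outside rows $p,p+1$, where they restrict to $\minimatrix{\lambda}{-1}{-1}{\lambda}$; subtracting column $p+1$ from column $p$ therefore yields the column $(\lambda+1)(0,\dots,0,1,-1,0)^{T}$. Pull out the factor $\lambda+1$, add row $p$ to row $p+1$ (so that column $p$ becomes the standard basis vector $e_p$), and expand along column $p$. The result is $\det(\lambda I_{p+2}-X)=(\lambda+1)\det(\lambda I_{p+1}-\widehat M)$, where $\widehat M$ is the explicit $(p+1)\times(p+1)$ matrix obtained from $B_1$ by adjoining below the rows $(0,2,\dots,2,1,0)$ and $(1,0,\dots,0,0,0)$ and, correspondingly, the columns $(0,1,\dots,1,1,0)^{T}$ and $(f,0,\dots,0,0,0)^{T}$; schematically
\[
\widehat M=\left(\begin{array}{c|c|c} B_1 & \mathbf{1}' & f e_1\\ \hline 2(\mathbf{1}')^{T}& 1 & 0\\ \hline e_1^{T} & 0 & 0 \end{array}\right),\qquad \mathbf{1}'=(0,1,\dots,1)^{T}.
\]

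Second, every column of $\widehat M$ sums to $f=p-1$: for the first $f$ columns this is exactly the column–sum identity \eqref{eq-RowSum} for $B_1$ corrected by the two extra entries below (the base value $p-2$ for column $1$ is corrected by $0+1$, and the base value $p-3$ for columns $2,\dots,f$ by $2+0$), and the last two columns sum to $f$ at sight. Hence $\mathbf{1}^{T}\widehat M=f\,\mathbf{1}^{T}$, so adding the first $p$ rows of $\lambda I_{p+1}-\widehat M$ to its last row turns that row into $(\lambda-f)(1,\dots,1)$; pull out $\lambda-f$, leaving the last row equal to $(1,\dots,1)$. Third, in the matrix so obtained the last column is $(-f,0,\dots,0,1)^{T}$; adding $f$ times the (all–ones) last row to the first row makes that column equal to $e_{p+1}$, and expanding along it leaves a $p\times p$ determinant which, on inspection, is exactly $\det(\lambda I_p-Y)$ for the $Y$ of the statement. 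Chaining the three reductions produces the displayed identity, and the theorem follows.

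The delicate part is the entry-by-entry bookkeeping in the first and third reductions — one must track precisely which rows acquire $-2$'s rather than $-1$'s under the operations, and keep the Laplace‑expansion signs straight (in each case the single remaining entry lies on the diagonal, so the sign is $+1$). The one conceptual ingredient is the constant column sum $p-1$ of $\widehat M$ used in the second reduction, which is immediate from \eqref{eq-RowSum}; it is precisely this that produces the factor $\lambda-f$ and, together with the $\lambda+1$ extracted from the twin columns $p,p+1$, accounts for the two eigenvalues $f$ and $-1$ of $X$ that are absent from the spectrum of $Y$.
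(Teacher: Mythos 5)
Your proof is correct and follows exactly the route the paper sketches: starting from the fact that the displayed $(p+2)\times(p+2)$ matrix $X$ is similar to $D=\diag(K_1,\dots,K_f,-1,-1,f)$ and then passing to the $p\times p$ target matrix by elementary row and column operations inside the determinant. I traced each of your three reductions — the column-subtraction and expansion giving the $(\lambda+1)$ factor, the column-sum identity giving the $(\lambda-f)$ factor, and the final clean-up producing $\det(\lambda I_p - Y)$ — and they all check out, including the column-sum verification via \eqref{eq-RowSum} and the Laplace-expansion signs.
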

	
Before proceeding, we remark that modifications of our main construction
apply to various generalizations of classical Kloosterman sums.  For instance, one might consider
the Galois field $\F_{p^n}$ in place of $\Z/p\Z$.  Moreover, our general scheme also applies to
hyper-Kloosterman sums (the appropriate analogue of our group ${\bf G}$ is discussed in \cite[p.~16]{Ku75}).

\section{Ramanujan multigraphs}\label{SectionRamanujan}

	Certain principal submatrices of the $T_i$ can be used to construct multigraphs
	having desirable spectral properties.
	To be more specific, a \emph{multigraph} is a graph that is permitted to have multiple edges
	and loops.\footnote{The terminology in the literature
	is somewhat inconsistent.  The term \emph{multigraph} is sometimes reserved
	for graphs with multiple edges but no loops.  If loops are present, then the 
	term \emph{pseudograph} is used.}

	Associated to a multigraph $\G$ is its adjacency matrix $A(\G)$,
	the real symmetric matrix whose rows and columns are indexed by the vertices
	$v_1,v_2,\ldots,v_n$ of $\G$ and whose $(j,k)$ entry $a_{j,k}$ is the number of 
	edges connecting $v_k$ to $v_j$.  In particular, if $j = k$ then $a_{j,j}$ counts the number
	of loops attached to the vertex $v_j$.   We refer to the eigenvalues of $A(\G)$ as the
	\emph{eigenvalues of $\G$}.

	The \emph{degree} of a vertex is the number of edges terminating at that vertex.
	We say that a multigraph $\G$ is \emph{$d$-regular} if each vertex has degree $d$.
	In this case $d$ is an eigenvalue of $\G$ with corresponding eigenvector $(1,1,\ldots,1)$.
	On the other hand, $-d$ is an eigenvalue of $\G$ if and only if $\G$ is bipartite,
	in which case the multiplicity of $-d$ corresponding to the number of connected components of $\G$.
	An easy application of the Gerschgorin disk theorem indicates that
	every eigenvalue of $\G$ belongs to the interval $[-d,d]$.  We therefore
	label the eigenvalues of a $d$-regular multigraph $\G$, according to their multiplicity, as follows:
	\begin{equation*}
		d \geq \lambda_0(\G) \geq \lambda_1(\G) \geq \cdots \geq \lambda_{n-1}(\G) \geq -d.
	\end{equation*}
	The eigenvalues of $\G$ of a $d$-regular multigraph which lie in the open interval $(-d,d)$
	are called the \emph{nontrivial eigenvalues} of $\G$.  We let $\lambda(\G)$ denote the absolute
	value of the nontrivial eigenvalue of $\G$ which is largest in magnitude.

	Following \cite{MurtySurvey}, we say that 
	a \emph{Ramanujan multigraph} is a $d$-regular multigraph $\G$ satisfying
	\begin{equation*}
		\lambda(\G) \leq 2 \sqrt{d-1}.
	\end{equation*}
	For instance, the Petersen graph is an example of a $3$-regular Ramanujan graph (see Figure \ref{FigurePetersen}).
	\begin{figure}[htb!]
		\begin{center}
			\includegraphics[width=1.25in]{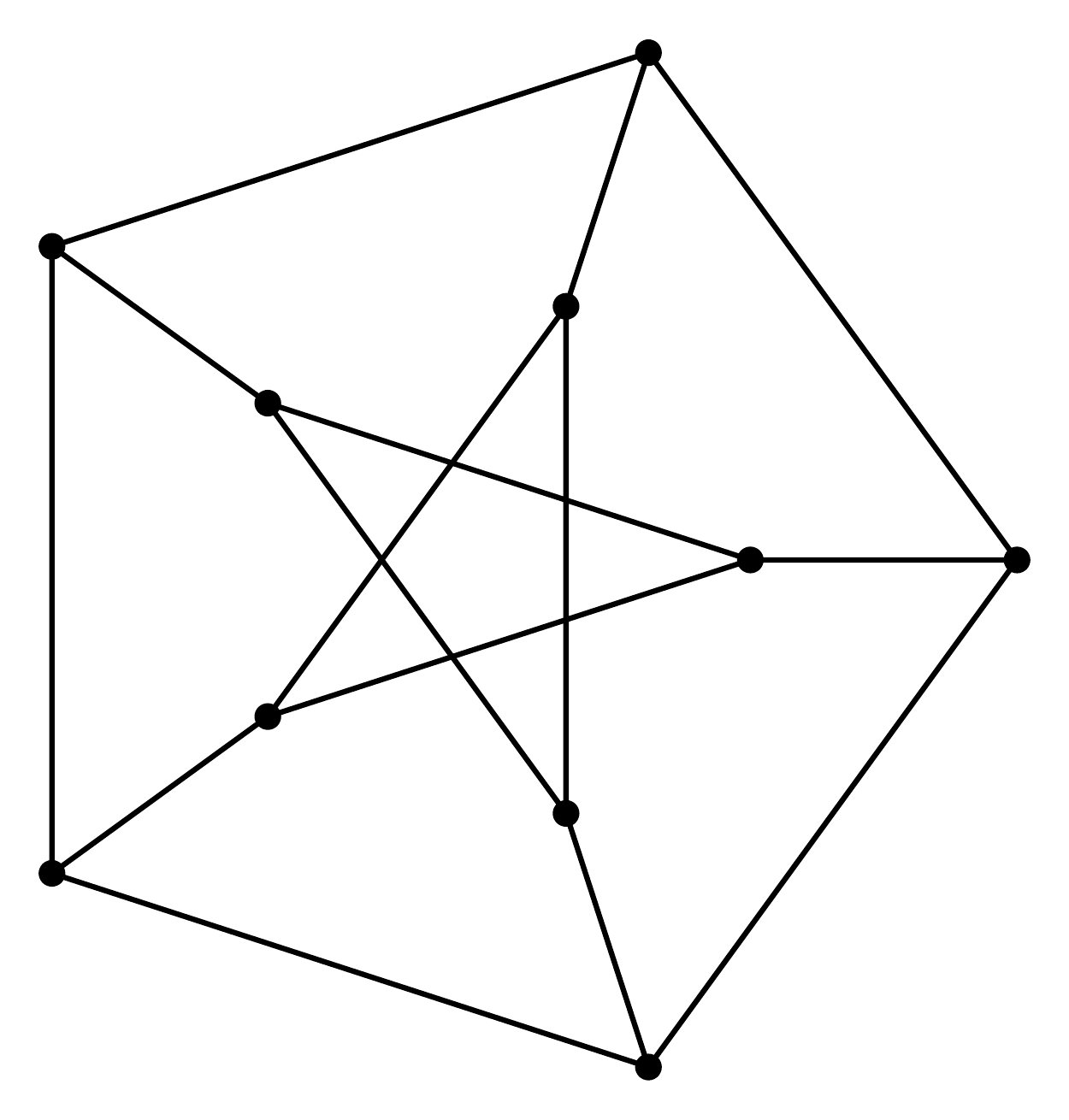}
			\caption{\footnotesize The Petersen graph is $3$-regular and has characteristic 
			polynomial $(z-3) (z+2)^4 (z-1)^5$.  The nontrivial
			eigenvalues $1$ and $-2$ are both smaller than $2\sqrt{2}$ in absolute value whence the Petersen graph is 
			Ramanujan.}
			\label{FigurePetersen}
		\end{center}
	\end{figure}
	There is a vast literature dedicated to the study of \emph{simple} (i.e., no loops or multiple edges) Ramanujan graphs.
	We refer the reader to the seminal papers \cite{LPS,Margulis,Morgenstern} and the texts
	 \cite{DSV} and \cite{LubotzkyBook} for more information.
	 
	We are now in a position to construct a family of Ramanujan multigraphs:
	 
	\begin{theorem}
		Let $p \geq 5$ be an odd prime, $1 \leq i \leq p-1$, and let $\beta(i,j,k)$ be given by \eqref{eq-Beta}.
		\begin{enumerate}\addtolength{\itemsep}{0.5\baselineskip}
			\item The multigraph $\G$ whose adjacency matrix is given by the matrix
				\begin{equation*}
					a_{j,k} = 1 + \left( \frac{\beta(i,j,k)}{p} \right)
				\end{equation*}
				is a $(p-2)$-regular Ramanujan multigraph on $p$ vertices.
			\item If $p \equiv 3 \pmod{4}$, then setting $a_{i,i} = 1$ and
				\begin{equation*}
					a_{j,k} = 1 + \left( \frac{\beta(i,j,k)}{p} \right)
				\end{equation*}
				otherwise yields a $(p-3)$-regular Ramanujan multigraph on $p-1$ vertices.
		\end{enumerate}
	\end{theorem}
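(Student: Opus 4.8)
The plan is to check, in each part, the three defining features of a Ramanujan multigraph: that the multigraph is $d$-regular, that $d$ is its Perron eigenvalue, and that every eigenvalue in the open interval $(-d,d)$ has modulus at most $2\sqrt{d-1}$. First I would reduce to the case $i=1$: by \eqref{eq-Multiply} one has $c_{i,j,k}=c_{1,\,i^{-1}j,\,i^{-1}k}$ with subscripts read modulo $p$, so in part (1) (resp.\ part (2)) the matrix in question is obtained from its $i=1$ counterpart by conjugating with the permutation matrix of $x\mapsto i^{-1}x$ on $\{1,\dots,p-1\}$ (leaving the extra index of part (1) fixed), and conjugation by a permutation matrix changes neither the degrees nor the spectrum. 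Write $A:=A_1$ for the upper-left $p\times p$ block of $T_1$ in \eqref{eq-TruncatedT}, and $B':=B_1-e_1e_1^{\mathsf T}$ (with $e_1$ the first standard basis vector).

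For regularity: in part (1), \eqref{eq-RowSum} says $B_1$ has row and column sums $p-2$ in row/column $1$ and $p-3$ elsewhere, while from \eqref{eq-TruncatedT} the bordering row and column added to form $A$ contribute $0$ to row/column $1$ and $1$ to each of the other $p-2$, and the new vertex itself has degree $p-2$; hence every row sum of $A$ equals $p-2$. In part (2), the rows $j\neq 1$ of $B_1$ already sum to $p-3$, and since $\beta(1,1,1)=-3$ by \eqref{eq-Beta} the standing hypothesis on $p$ is exactly what makes $c_{1,1,1}=1+\bigl(\tfrac{-3}{p}\bigr)$ equal to $2$ rather than $0$, so passing to $B'$ (equivalently, setting $a_{1,1}=1$) keeps the matrix nonnegative and brings its first row sum to $p-3$ as well. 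In both cases the matrix is nonnegative, symmetric, and has all row sums equal to $d$, so $(1,1,\dots,1)^{\mathsf T}$ is an eigenvector for $d$ and, by Gershgorin, $d$ is the largest eigenvalue; thus the nontrivial eigenvalues are precisely the remaining ones.

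The heart of the matter is the bound $|\lambda|\le 2\sqrt{d-1}$ on those nontrivial eigenvalues, and this step I expect to be the main obstacle. Here $A$ and $B_1$ are principal submatrices of $T_1$ (delete the last two, resp.\ last three, rows and columns of \eqref{eq-TruncatedT}), and $B'$ is a rank-one negative-semidefinite perturbation of $B_1$; since $T_1$ is unitarily similar to $D_1=\operatorname{diag}(K_1,\dots,K_{p-1},-1,-1,p-1)$ by Lemma \ref{LemmaPowerful}, Cauchy interlacing---exactly as in the chain of inequalities preceding Table \ref{TableEigenvalues}---together with the Weil bound \eqref{eq-Weil} already shows that every nontrivial eigenvalue lies in $[-2\sqrt p,\,2\sqrt p]$. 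Since $d-1=p-3$ (resp.\ $p-4$) is slightly smaller than $p$, this estimate must be sharpened, and the mechanism I would exploit is that the nontrivial spectrum of $A$ is exactly the spectrum of $T_1$ restricted to the codimension-two subspace $\{(x,0,0):\sum_i x_i=0\}$, which one checks is orthogonal to the Perron eigenvector of $T_1$ and to one of its $(-1)$-eigenvectors (the relevant columns of $U$ in \eqref{eq-TruncatedU}); being a proper subspace of the span of the Kloosterman eigenvectors together with the second $(-1)$-eigenvector, this subspace strictly depresses the top eigenvalue below $\max_u K_u$, and quantifying the depression---by estimating from the explicit entries of $U$ how much of the extremal Kloosterman eigenvector is excised---should furnish the gain needed to reach $2\sqrt{d-1}$. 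As consistency checks along the way, the identities $\operatorname{tr}A^2=2p^2-7p+6$ (via $\operatorname{tr}B_1^2=2p^2-9p+10$ from \eqref{eq-BSOS}) together with \eqref{eq-KloostermanSOS} and \eqref{eq-KloostermanQuartic} pin down the variance and fourth moment of the nontrivial spectrum, which must be compatible with the asserted interval, and Table \ref{TableEigenvalues} shows the picture is already borne out numerically for small $p$.
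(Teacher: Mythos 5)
Your overall structure tracks the paper's proof closely: the paper also derives regularity from \eqref{eq-RowSum} and the bordering row/column visible in \eqref{eq-TruncatedT}, and then cites \eqref{eq-Chain01} and \eqref{eq-Chain02} (Cauchy interlacing on principal submatrices of $T_i$ plus the Weil bound) for the spectral estimate. However, there is a concrete error in your regularity discussion for part~(2): the condition $\bigl(\tfrac{-3}{p}\bigr)=1$, which is what forces $c_{1,1,1}=2$, is equivalent to $p\equiv 1\pmod 3$, not to $p\equiv 3\pmod 4$. These are independent congruences ($p=11$ satisfies $p\equiv 3\pmod 4$ but $\bigl(\tfrac{-3}{11}\bigr)=-1$, while $p=13$ satisfies $\bigl(\tfrac{-3}{13}\bigr)=1$ but $p\equiv 1\pmod 4$). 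You should not have rationalized the stated hypothesis as ``exactly what makes $c_{1,1,1}=2$''; what your own computation actually shows is a mismatch between the theorem's hypothesis and what the regularity count requires, and that deserves to be flagged rather than explained away.

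On the spectral bound, you are right to be uneasy, and you deserve credit for articulating the tension that the paper glosses over: interlacing together with Weil yields only $|\lambda|\le 2\sqrt{p}$ for the nontrivial eigenvalues, whereas the Ramanujan threshold is $2\sqrt{d-1}=2\sqrt{p-3}$ (resp.\ $2\sqrt{p-4}$), which is strictly smaller. The paper simply treats $2\sqrt p$ and $2\sqrt{d-1}$ as interchangeable and offers no mechanism to convert one into the other. Your proposed remedy --- compressing $T_1$ onto the subspace $\{(x,0,0):\sum_i x_i=0\}$ (note this has codimension~3 in $\C^{p+2}$, not~2) and arguing that the extremal Kloosterman eigenvector is partially excised --- is a reasonable heuristic, but as written it is only a sketch: you give no lower bound on the angle between that subspace and the column of $U$ attached to the extremal $K_u$, and without such a quantitative estimate the depression cannot be shown to exceed $2\sqrt p - 2\sqrt{p-3}\approx 3p^{-1/2}$. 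So the honest verdict is that your proof proposal reproduces the paper's argument, adds a correct observation that the paper's cited bound is formally too weak, but does not close the gap it identifies, and along the way misidentifies the arithmetic content of the hypothesis in part~(2).
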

	
	\begin{proof}
		The regularity of the resulting multigraphs are ensured by \eqref{eq-RowSum}
		and the general form \eqref{eq-TruncatedT} of $T_i$.  The fact that they are Ramanujan
		follows from \eqref{eq-Chain01} and \eqref{eq-Chain02}.
	\end{proof}

	For instance, letting $p=7$ and $i = 1$ we obtain the adjacency matrix 
	\begin{equation}\label{eq-Graph01}
		A=\small
		\left(
		\begin{array}{ccccccc}
			 2 & 0 & 2 & 1 & 0 & 0 & 0 \\
			 0 & 1 & 0 & 1 & 0 & 2 & 1 \\
			 2 & 0 & 0 & 2 & 0 & 0 & 1 \\
			 1 & 1 & 2 & 0 & 0 & 0 & 1 \\
			 0 & 0 & 0 & 0 & 2 & 2 & 1 \\
			 0 & 2 & 0 & 0 & 2 & 0 & 1 \\
			 0 & 1 & 1 & 1 & 1 & 1 & 0
		\end{array}
		\right)
	\end{equation}
	corresponding to the multigraph depicted in Figure \ref{FigureGraph01}.
	\begin{figure}[htb!]
		\begin{center}
			\includegraphics[width=1.25in]{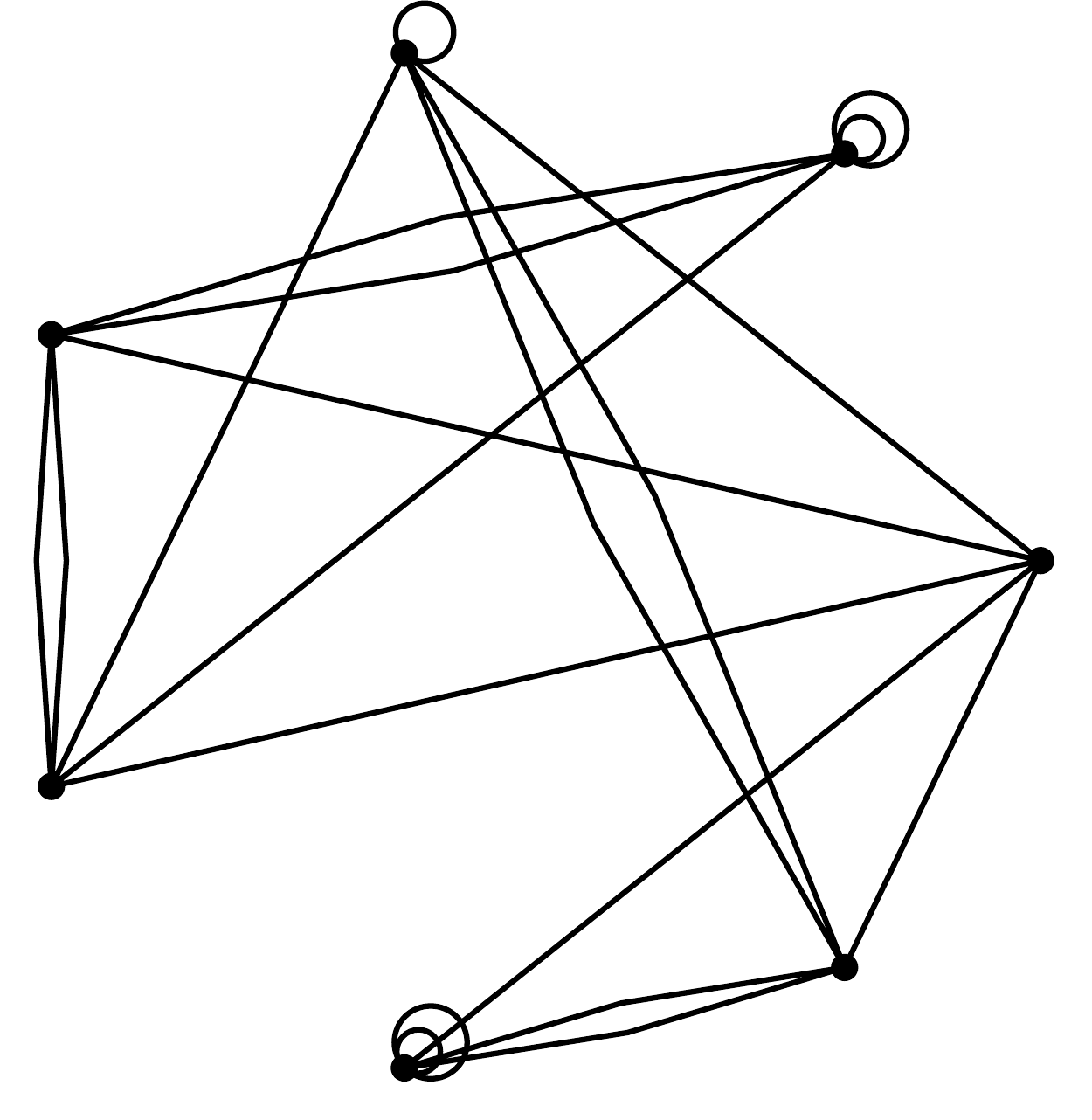}
			\caption{\footnotesize The Ramanujan multigraph corresponding to the adjacency matrix \eqref{eq-Graph01}.}
			\label{FigureGraph01}
		\end{center}
	\end{figure}

\bibliography{Kloosterman}

\begin{thebibliography}{10}

\bibitem{Adolphson}
Alan Adolphson.
\newblock On the distribution of angles of {K}loosterman sums.
\newblock {\em J. Reine Angew. Math.}, 395:214--220, 1989.

\bibitem{CR62}
Charles~W. Curtis and Irving Reiner.
\newblock {\em Representation theory of finite groups and associative
  algebras}.
\newblock Pure and Applied Mathematics, Vol. XI. Interscience Publishers, a
  division of John Wiley \& Sons, New York-London, 1962.

\bibitem{DSV}
Giuliana Davidoff, Peter Sarnak, and Alain Valette.
\newblock {\em Elementary number theory, group theory, and {R}amanujan graphs},
  volume~55 of {\em London Mathematical Society Student Texts}.
\newblock Cambridge University Press, Cambridge, 2003.

\bibitem{Fisher}
Benji Fisher.
\newblock Distinctness of {K}loosterman sums.
\newblock In {\em {$p$}-adic methods in number theory and algebraic geometry},
  volume 133 of {\em Contemp. Math.}, pages 81--102. Amer. Math. Soc.,
  Providence, RI, 1992.

\bibitem{DRHB}
D.~R. Heath-Brown.
\newblock Arithmetic applications of {K}loosterman sums.
\newblock {\em Nieuw Arch. Wiskd. (5)}, 1(4):380--384, 2000.

\bibitem{HJ}
Roger~A. Horn and Charles~R. Johnson.
\newblock {\em Matrix analysis}.
\newblock Cambridge University Press, Cambridge, 1990.
\newblock Corrected reprint of the 1985 original.

\bibitem{Hurt}
Norman~E. Hurt.
\newblock Kloosterman sums and their applications: a review.
\newblock {\em Results Math.}, 29(1-2):16--41, 1996.

\bibitem{Isaacs}
I.~Martin Isaacs.
\newblock {\em Character theory of finite groups}.
\newblock AMS Chelsea Publishing, Providence, RI, 2006.
\newblock Corrected reprint of the 1976 original [Academic Press, New York;
  MR0460423].

\bibitem{Iwaniec}
Henryk Iwaniec and Emmanuel Kowalski.
\newblock {\em Analytic number theory}, volume~53 of {\em American Mathematical
  Society Colloquium Publications}.
\newblock American Mathematical Society, Providence, RI, 2004.

\bibitem{JL}
Gordon James and Martin Liebeck.
\newblock {\em Representations and characters of groups}.
\newblock Cambridge University Press, New York, second edition, 2001.

\bibitem{KatzBook}
Nicholas~M. Katz.
\newblock {\em Gauss sums, {K}loosterman sums, and monodromy groups}, volume
  116 of {\em Annals of Mathematics Studies}.
\newblock Princeton University Press, Princeton, NJ, 1988.

\bibitem{Kloosterman}
H.~D. Kloosterman.
\newblock On the representation of numbers in the form {$ax^2+by^2+cz^2+dt^2$}.
\newblock {\em Acta Math.}, 49(3-4):407--464, 1927.

\bibitem{Ku75}
Philip~C. Kutzko.
\newblock The cyclotomy of finite commutative {P}.{I}.{R}.'s.
\newblock {\em Illinois J. Math.}, 19:1--17, 1975.

\bibitem{LehmerCubes}
D.~H. Lehmer and Emma Lehmer.
\newblock On the cubes of {K}loosterman sums.
\newblock {\em Acta Arith.}, 6:15--22, 1960.

\bibitem{Lehmer}
D.~H. Lehmer and Emma Lehmer.
\newblock The cyclotomy of {K}loosterman sums.
\newblock {\em Acta Arith.}, 12:385--407, 1966/67.

\bibitem{LPS}
A.~Lubotzky, R.~Phillips, and P.~Sarnak.
\newblock Ramanujan graphs.
\newblock {\em Combinatorica}, 8(3):261--277, 1988.

\bibitem{LubotzkyBook}
Alexander Lubotzky.
\newblock {\em Discrete groups, expanding graphs and invariant measures},
  volume 125 of {\em Progress in Mathematics}.
\newblock Birkh\"auser Verlag, Basel, 1994.
\newblock With an appendix by Jonathan D. Rogawski.

\bibitem{Margulis}
G.~A. Margulis.
\newblock Explicit constructions of expanders.
\newblock {\em Problemy Pereda\v ci Informacii}, 9(4):71--80, 1973.

\bibitem{Morgenstern}
Moshe Morgenstern.
\newblock Existence and explicit constructions of {$q+1$} regular {R}amanujan
  graphs for every prime power {$q$}.
\newblock {\em J. Combin. Theory Ser. B}, 62(1):44--62, 1994.

\bibitem{MurtySurvey}
M.~Ram Murty.
\newblock Ramanujan graphs.
\newblock {\em J. Ramanujan Math. Soc.}, 18(1):33--52, 2003.

\bibitem{Salie}
Hans Sali{\'e}.
\newblock \"{U}ber die {K}loostermanschen {S}ummen {$S(u,v;q)$}.
\newblock {\em Math. Z.}, 34(1):91--109, 1932.

\bibitem{Stepanov}
S.~A. Stepanov.
\newblock The number of points of a hyperelliptic curve over a finite prime
  field.
\newblock {\em Izv. Akad. Nauk SSSR Ser. Mat.}, 33:1171--1181, 1969.

\bibitem{Weil}
Andr{\'e} Weil.
\newblock On some exponential sums.
\newblock {\em Proc. Nat. Acad. Sci. U. S. A.}, 34:204--207, 1948.

\end{thebibliography}

\end{document}